\let\lineref\relax
\definecolor{bl}{HTML}{002b36}
\definecolor{wh}{HTML}{657b83}
 \newtheorem{thm}{Theorem}[section]
 \newtheorem{lem}[thm]{Lemma}{\rm}
 \newtheorem{prop}[thm]{Proposition}
 \newtheorem{defn}[thm]{Definition}{\rm}
 \newtheorem{idpc}[thm]{IDPC}{\rm}
 \newtheorem{prog}[thm]{Program constant}{\rm}
 \newtheorem{elim}[thm]{Elimination rule}{\rm}
 \newtheorem{rem}[thm]{Remark}
\numberwithin{equation}{section}
\DeclareMathOperator{\grad}{grad}
\begin{document}
	\definecolor{darkblue}{rgb}{0.0, 0.0, 0.55}
	\definecolor{bordeaux}{rgb}{0.34, 0.01, 0.1}
\newcommand{\spc}{\ensuremath{\,\,}}
\def\red{\color{red}}
\def\bl{\color{blue}}
\def\ora{\color{orange}}
\def\green{\color{green}}
\def\br{\color{brown}}
\newcommand{\clist}{\texttt{c\_list}}
\newcommand{\calpha}{\texttt{c\_alpha}}
\newcommand{\slist}{\texttt{s\_list}}
\def\la{\langle}
\def\ra{\rangle}
\def\e{{\rm e}}
\def\x{\mathbf{x}}
\def\by{\mathbf{y}}
\def\bz{\mathbf{z}}
\def\F{\mathcal{F}}
\def\R{\mathbb{R}}
\def\T{\mathbf{T}}
\def\N{\mathbb{N}}
\def\K{\mathbf{K}}
\def\bK{\overline{\mathbf{K}}}
\def\Q{\mathbf{Q}}
\def\M{\mathbf{M}}
\def\O{\mathbf{O}}
\def\C{\mathbf{C}}
\def\P{\mathbf{P}}
\def\Z{\mathbb{Z}}
\def\H{\mathcal{H}}
\def\A{\mathbf{A}}
\def\V{\mathbf{V}}
\def\AA{\overline{\mathbf{A}}}
\def\B{\mathbf{B}}
\def\L{\mathbf{L}}
\def\bS{\mathbf{S}}
\def\H{\mathcal{H}}
\def\I{\mathbf{I}}
\def\Y{\mathbf{Y}}
\def\X{\mathbf{X}}
\def\G{\mathbf{G}}
\def\L{\mathbf{L}}
\def\D{\mathbf{D}}
\def\B{\mathbf{B}}
\def\f{\mathbf{f}}
\def\z{\mathbf{z}}
\def\y{\mathbf{y}}
\def\d{\hat{d}}
\def\bx{\mathbf{x}}
\def\y{\mathbf{y}}
\def\v{\mathbf{v}}
\def\g{\mathbf{g}}
\def\w{\mathbf{w}}
\def\b{\mathcal{B}}
\def\a{\mathbf{a}}
\def\q{\mathbf{q}}
\def\u{\mathbf{u}}
\def\s{\mathcal{S}}
\def\cc{\mathcal{C}}
\def\co{{\rm co}\,}
\def\cp{{\rm CP}}
\def\tg{\tilde{f}}
\def\tx{\tilde{\x}}
\def\supmu{{\rm supp}\,\mu}
\def\supnu{{\rm supp}\,\nu}
\def\m{\mathcal{M}}
\def\bR{\mathbf{R}}
\def\om{\mathbf{\Omega}}
\def\s{\mathcal{S}}
\def\k{\mathcal{K}}
\def\la{\langle}
\def\ra{\rangle}
\def\blambda{{\boldmath{\lambda}}}
\def\bsmlambda{\boldmath{\lambda}}
\def\ov{\overline{o}}
\def\und{\underline{o}}
\def\minlog{\texttt{Minlog}}
\newcommand{\newjar}[1]{{{\color{blue}#1}}}
\newcommand{\comment}[3]{%
\ifcomment%
	{\color{#1}\bfseries\sffamily(#3)%
	}%
	\marginpar{\textcolor{#1}{\hspace{3em}\bfseries\sffamily #2}}%
	\else%
	\fi%
}

\newcommand{\comm}[3]{%
	\ifcomm%
	{\color{#1} #3}
	\else%
	\fi%
}

\newcommand{\victor}[1]{
	\comment{blue}{V}{#1}
}

\newcommand{\answer}[1]{
	\comm{black}{A}{#1}
}
\newif\ifcomm
\commfalse
\commtrue

\newif\ifcomment
\commentfalse
\commenttrue
\title[Computer-assisted proofs for Lyapunov stability via SOS certificates]{Computer-assisted proofs for Lyapunov stability via Sums of Squares certificates and Constructive Analysis}

\author[G. Devadze \and V. Magron  \and S. Streif]{Grigory Devadze \and Victor Magron \and Stefan Streif}

\address{Grigory Devadze: Technische Universit\"at Chemnitz\\
Automatic Control and System Dynamics Lab\\
09107 Chemnitz, Germany}
\email{grigory.devadze@etit.tu-chemnitz.de}
\address{Victor Magron: LAAS, 7 avenue du Colonel Roche\\
31077 Toulouse C\'edex 4,France}
\email{vmagron@laas.fr}
\address{Stefan Streif: Technische Universit\"at Chemnitz\\
Automatic Control and System Dynamics Lab\\
09107 Chemnitz, Germany}
\email{stefan.streif@etit.tu-chemnitz.de}
\date{}

\begin{abstract}
We provide a computer-assisted approach to ensure that a given discrete-time polynomial system is (asymptotically) stable.
Our framework relies on constructive analysis together with formally certified sums of squares Lyapunov functions. The crucial steps are formalized within the proof assistant $\minlog$. 
We illustrate our approach with various examples issued from the control system literature. 
\end{abstract}

\keywords{Lyapunov stability;sums of squares;  semidefinite programming relaxations; constructive analysis; proof assistants}

\subjclass{90C22}

\maketitle

\section{Introduction}
Computer-assisted theorem proving -- performed by special software called {\em proof assistants} -- have been successfully used for important problems, including the formalization of  Kepler's conjecture  \cite{pi17} and a proof of the odd order theorem in group theory \cite{gonthier2013machine}. One of the main advantages of using the proof assistants is their ability of the computer-aided checking of the formal correctness of the logical statements.

Lyapunov stability plays one of the most crucial roles in nonlinear control theory and the {\em second method of Lyapunov}, also called the {\em Lyapunov function method} is a standard tool for the design of controllers.
In this work we present a computer-assisted verification framework for the Lyapunov stability analysis of polynomial dynamical. 

We consider the problem of certifying the stability of the autonomous discrete-time dynamical system 
 \begin{align} \label{def:dyn-sys}
 	\pmb x(n+1) = f(\pmb x(n))
 \end{align}
 where $f:D \rightarrow \mathbb{R}^N$ is a continuous function with equilibrium point at origin and $D \subset \mathbb{R}^N$.
 
 In the presented approach, first, Lyapunov functions are constructed via sums of squares techniques; second, the formal correctness is certified within of the framework of \texttt{Minlog}. The certificate guarantees that the constructed Lyapunov function meets its specification and the combination of both approaches allows for automation of reasoning on dynamical systems. We provide the code as open source \cite{con2formlog} libraries for \texttt{Minlog}. The stability analysis requires dealing with nonlinear inequalities and in contrast to purely numerical methods \cite{Giesl2015-review}, we are interested in reasoning with the so-called exact real arithmetic. For this purpose the standard library of \texttt{Minlog} is extended in this work by several necessary definitions and theorems in the setting of constructive reals.
 The presented concepts may also be viewed as a preliminary work for the development of formalized nonlinear control theory, hence offering the possibility of computer-assisted controller design and program extraction for controller implementation.
\subsection*{Related works in the formalization of dynamical systems}
%
%

In the context of dynamical systems,  \cite{boldo2013wave} formally proves the correctness of a program implementing the numerical resolution of a wave equation, with Frama-C \cite{cuoq2012frama}.  
In \cite{immler2019flow}, the authors formalize  variational equations related to the solutions of ordinary differential equations in Isabelle/HOL \cite{Nipkow02IsabelleHol}. 
Beyond formalization of numerical analysis procedures, several frameworks are currently available to perform computer-assisted proofs of mathematical analysis. 
The work in \cite{affeldt2020formalizing} proposes formalized mathematical structures in Coq \cite{CoqProofAssistant} for function  spaces. 
Earlier developments have been pursued towards the formalization of real analysis in several proof assistants, including Coq \cite{mayero2001formalisation}, HOL-Light,  \cite{harrison2012theorem}, Isabelle/HOL \cite{fleuriot2000mechanization} or  ACL2 \cite{gamboa2001nonstandard}, as well as constructive real analysis \cite{geuvers2000constructive,Krebbers2013-realanalysis}.
Proof assistants also play an important role in formal verification of control systems. 
Zou \cite{Zou2013-form-ver-Chinese-train} used Isabelle for implementation of their control system formal verification based on Hybrid Hoare logic. 
The Why3 platform \cite{bobot2011why3}  coupled with MATLAB/Simulink was used in \cite{Araiza-Illan2014-thm-prover-sys, Araiza2015-thm-proving-Simulink} to perform simple stability checks of linear discrete systems with quadratic Lyapunov functions. 
Gao \cite{Gao2014-descr-ctrl-thr} mentioned several proof assistant software tools to be considered for implementation of formal verification of control systems, including Coq \cite{CoqProofAssistant}, HOL-Light \cite{harrison1996hol}, Isabelle/HOL and Lean \cite{de2015lean}. The most closely related work is due to \cite{Anand2015-roscoq} where the authors present the framework for the certified programs for robots based on constructive real numbers \cite{Oconnor2008-certified}. Their work is based on the Logic of Events for distributed agent systems and generation of verified software controllers. However they do not address the fundamental principles of control theory and system dynamics like stability, robustness or optimality. In \cite{Rouhling2018-formal} the author formally proves the soundness of a control function for the inverted pendulum within the classical logic based on the prior formalization of Lassalle's principle \cite{cohen2017formal}. The author in \cite{Rouhling2018-formal} also mentions the impossibility of generating and running programs with that framework.
\subsection*{Related works in the formalization of constructive analysis}

To the authors best knowledge there exist two important formalizations of Bishop-like constructive analysis: \texttt{C-CORN} in proof assistant Coq \cite{cruz2004c} and the standard library of constructive reals in proof assistant $\minlog$ \cite{schwichtenberg2011minlog}.
The Coq proof assistant is based on the calculus of inductive constructions which is a dependent type theory with inductive and co-inductive types. The important part of \texttt{C-CORN} library is the type of constructive reals \texttt{CR} which has been created with computationally feasible execution in mind \cite{Oconnor2008-certified}. The constructive reals are defined over a general concept of a so-called regular function, i.e., a function $x: \mathbb{Q}^+ \rightarrow \mathbb{Q} $ is called regular if
\[
\forall \varepsilon_1,\varepsilon_2 \spc |x(\varepsilon_1) - x(\varepsilon_2)| \le \varepsilon_1 + \varepsilon_2.
\]
Using the regular functions principle, a completion of any metric space $X$ (denoted as $\mathcal{C}(X)$) can be constructed \cite{OConnor2007-monad}. It is shown that the completion  forms a monad and the set of constructive real numbers is defined as $\mathbb{R}:=\mathcal{C}(\mathbb{Q})$. Thus, all usual operations on real numbers are defined as uniformly continuous functions from $\mathbb{Q}$ to $\mathbb{Q}$ and completed afterwards using the completion monad to functions on $\mathbb{R}$. Such application of completion monad is often called lifting. The \texttt{C-CORN} library is a large code base consisting of formalization of elementary transcendental functions, power series, differentiation, integration and implementation of Picard-Lindel\"of theorem for ODE \cite{Makarov2013-picard}. Still the program extraction mechanism is not a straightforward task due to the complexity in terms of the depth of definitions, which is required when dealing with constructive real numbers \cite{Krebbers2013-realanalysis}.

In the present work the interactive proof system $\minlog$ \cite{schwichtenberg2011minlog,Berger2011-minlog} is used for verification. 
The development of $\minlog$ adresses proof-theoretical methods based on realizability and their applications to the verification and especially to the program extraction. 
The implementation of $\minlog$ is done in \texttt{Scheme}. The main reason which justifies the usage of $\minlog$ is its special treatment of the program extraction, i.e., in addition to the program extraction, $\minlog$ provides the proof of correctness of the extracted program. 
The machinery behind $\minlog$ is supported by the soundness theorem and the theory of computable functionals \cite{Berger2011-minlog,Berger2012-proofs}. 
The conversion of proofs to instances handled by the soundness theorem can be done automatically \cite{Berger2011-minlog,Miyamoto2013-program,Benl1998-minlog}. This kind of approach is not used in, e.g., Coq \cite{CoqProofAssistant}, where the program extraction is guided by the use of external tools. Due to the so-called normalization-and-evaluation technique, $\minlog$ achieves often more simpler and efficient terms, which matters in practice \cite{Berger2011-minlog}.
As for constructive analysis the standard $\minlog $ library consists of formalization of real numbers as Cauchy sequences of rationals, uniformly continuous functions and Intermediate Value Theorem. As all uniformly continuous functions are represented as maps from $\mathbb{Q}$ to $\mathbb{Q}$, a similar lifting technique is used, but without the formalization of the notion of metric spaces. Even though the $\minlog$ library misses important basic notions, we found it promising for our purpose, as it presents an easy framework based on natural deduction. We refer to Section \ref{sec:minlog} for more details.

\subsection*{Related works in certification of polynomial systems}
A related result~\cite{khalil2002nonlinear} states that the equilibrium of the system (\ref{def:dyn-sys}) is asymptotically stable if there is some positive definite function $V(\x)$ such that the difference of successive values of $V$ along the trajectories of the system is negative definite, or stated equivalently, that the opposite of this difference is positive definite.
If we replace the later condition by the weaker condition that the derivative is nonnegative, then the system is stable.
In both cases, such a function $V$ is called a {\em Lyapunov function}.
For polynomial systems, i.e., when $f$ is a polynomial, these two sufficient positivity/nonnegativity conditions can be strengthened by computing a positive definite function $V$ which is a {\em sum of squares} (SOS) of polynomials and such that $- \Delta V$ is SOS~\cite{phdParrilo}.
In this case, we refer to $V$ as an {\em SOS Lyapunov function}.\\
Proving stability with SOS polynomials lies in the research track of characterizing nonlinear systems through linear programs (LP), whose unknown variables are measures with support on the system constraints. 
This methodology was first introduced for static polynomial optimization problems~\cite{lasserre2001global,phdParrilo}. 
In~\cite{lasserre2001global}, Lasserre provides a  hierarchy of relaxations yielding a converging sequence of lower bounds for the minimum of a polynomial over a set of constraints also defined with polynomials. 
Each bound is obtained by solving a semidefinite program (SDP) \cite{vandenberghe1996semidefinite}, i.e., by computing the infimum of a linear function under the constraint that a symmetric matrix has only nonnegative eigenvalues. 
SDP has by its own many applications, for instance in combinatorial optimization~\cite{Gvozdenovic09}, control theory~\cite{BEFB94} or parameter estimation \cite{Streif2013-estimation}. 
Available software libraries such as SeDuMi~\cite{Sturm98usingsedumi}, SDPA~\cite{Yamashita10SDPA} or Mosek~\cite{moseksoft} can solve SDP with up to a few thousand variables.
The general idea consists of modeling a polynomial optimization problem as an infinite-dimensional LP over Borel measures. 
In practice, one approximates the solution of this LP with a converging SDP primal-dual programs, called {\em moment-SOS} or Lasserre's  hierarchy.
The primal is a \emph{moment} problem, that is an optimization problem where variables are the moments of a Borel measure. The first moment is related to the mean values, the second moment is related to the variances, etc.  
The book~\cite{lasserre2009moments} provides a non-exhaustive list of various problems arising in applied mathematics, statistics and probability, economics, engineering which can be cast as instances of optimization problems over  moments. 
The dual is an SOS problem, where the variables are the coefficients of SOS polynomials. An example of SOS polynomial is $(\frac{1}{3} x_1 - x_2)^2 + (x_1+x_2)^2$. 
Even though there exist positive polynomials which cannot be written with SOS decompositions, it turns out that when the set of constraints satisfies certain assumptions (slightly stronger than compactness) then one can represent positive polynomials with weighted SOS decompositions. 
These weights happen to be the polynomials defining the set of constraints. Among several similar powerful results for representation of positive polynomials, the one related to the dual of Lasserre's hierarchy is due to Putinar~\cite{Putinar1993positive}.
For dynamical systems, this hierarchy have been extended multiple times, for example to obtain converging approximations for optimal control problems~\cite{lasserre2008nonlinear}, reachable sets~\cite{magron2019semidefinite} or invariant measures~\cite{invsdp}.\\
However, such methods rely solely on numerical SDP solvers, implemented in double precision, thus output approximate nonnegativity certificates. 
In order to circumvent the numerical errors, one can use several algorithmic frameworks either in the univariate case~\cite{Chevillard11,univsos} or in the multivariate case~\cite{PaPe08,KLYZ08,magron2018exact}.
In particular, 
the rounding-projection algorithm from \cite{PaPe08} outputs a weighted rational SOS decompositon of polynomials lying in the interior of the SOS cone.
More recently, \cite{magron2018exact} has provided an alternative framework to
compute such decompositions. 
In a nutshell, this framework first outputs an
approximate SOS decomposition for a perturbation of the input polynomial with an
arbitrary-precision SDP solver. 
Then an exact decomposition is provided thanks to
the perturbation terms and a compensation phenomenon of the numerical errors.
The underlying algorithm is implemented in the RealCertify Maple package \cite{magron2018realcertify}.\\
Beyond the stability proofs of critical control systems, computing exact certificates of  nonnegativity is mandatory in many application fields, such as certified evaluation in computer arithmetics~\cite{Chevillard11,martin2017reflexive} or 
formal verification of real inequalities~\cite{magron14,harrison2007verifying,Monniaux-Positivstellensatz2011} within proof 
assistants such as Coq~\cite{CoqProofAssistant} or HOL-Light~\cite{harrison1996hol}.

\subsection*{Contributions}
The aim of the present work is to provide  computer-assisted proofs for the stability of a given discrete-time polynomial system.  
For such a system, our framework consists of proving formally that a given function is a Lyapunov function.
Our formal proofs of polynomial nonnegativity are handled with weighted SOS certificates. 
Since proof assistants such as \texttt{MinLog} have computational limitation, we rely on external tools that produce certificates, whose checking is reasonably easier from a computational point of view. 
These certificates are obtained with the RealCertify \cite{magron2018realcertify} tool available outside of the proof assistant \texttt{MinLog} and  verified inside. 
Our first contribution, presented in Section~\ref{sec:prelim_exact}, is to provide an algorithm, called \texttt{exactLyapunov} to compute exact weighted rational SOS Lyapunov functions for a given discrete-time polynomial system (\ref{def:dyn-sys}).
For the sake of clarity, we provide detailed explanation for the case of discrete systems only. Analogously, 	the results can be applied to the continuous-time systems if we require that the Lyapunov function decreases along the system's trajectories. 
Section \ref{sec:minlog} is dedicated to the presentation of the $\minlog$ formalization framework and its extension of the standard library. 
Our further contribution, provided in Section \ref{sec:formal}, is to derive a framework to formally prove the stability of such  systems, by verifying the output of  \texttt{exactLyapunov} within the $\minlog$ system.
We illustrate our formal framework in Section \ref{sec:benchs}. 

\section{Preliminary Background and Results}
\label{sec:prelim}
\subsection{Positive polynomials, sums of squares}
\label{sec:prelim_sos}
Let $\mathbb{Q}$ (resp.~$\mathbb{R}$) be the field of rational (resp.~real) numbers.
With $d,N \in \N$, let us denote by $\R[\x]$ (resp. $\R[\x]_{d}$) the space of real-valued $N$-variate polynomials (resp. of degree at most $d$) in the variable $\x=(x_1,\ldots,x_N) \in \R^N$.
Given a real symmetric matrix $\G$, the notation $\G \succeq 0$ (resp. $\G \succ 0$) means that $\G$ is \emph{positive semidefinite} (resp. positive definite), that is $\G$ has only nonnegative (resp. positive) eigenvalues.
Let us note $\Sigma[\x]$ the set of sums of squares (SOS) of polynomials and $\Sigma[\x]_{2 d} := \Sigma[\x] \cap \R[\x]_{2 d}$ the set of SOS polynomials of degree at most $2 d$.
An $N$-variate polynomial $g$ belongs to $\Sigma[\x]_{2 d}$ if there exist a nonzero $r \in \N$ and $g_1,\dots,g_r \in \R[\x]_{d}$ such that $g = g_1^2 + \dots + g_r^2$.
An example of SOS polynomial is $g = 4 x_1^4 + 4
x_1^3 x_2 - 7 x_1^2 x_2^2 - 2 x_1 x_2^3 + 10 x_2^4 = (2 x_1 x_2 + x_2^2)^2 + (2
x_1^2 + x_1 x_2 - 3 x_2^2)^2$.
For each multi-index $\alpha = (\alpha_1,\dots,\alpha_N) \in \N^N$, we use the notation $\x^{\alpha} = x_1^{\alpha_1} \cdots x_1^{\alpha_N}$, and we write $g(\x) = \sum_{\alpha} g_{\alpha} \x^{\alpha}$.
The support of a polynomial $g$ written as above is the set $\{\alpha \in \N^N: g_\alpha \neq 0 \}$. 
For our previous example, the support is $\{(4,0), (3,1), (2,2) ,(1,3), (0,4) \}$.
We say that $g \in \R[\x]_{d}$ has {\em full support} if its support is $\N^N_d := \{ \alpha \in \N^N : \alpha_1 + \dots + \alpha_N \leq d \}$.

It is trivial that the condition of $g$ to be SOS ensures that $g$ is nonnegative over $\R^N$. 
The converse statement establishing the equivalence between nonnegative polynomials and SOS polynomials, is true only in the cases of univariate polynomials of even degree, quadratic polynomials and quartic polynomials in two variables. 
This equivalence was proven by Hilbert and we refer the interested reader to \cite{reznick2000some} and the references therein for more details.
The condition that $g \in \Sigma[\x]_{2 d}$ is equivalent to the existence of a symmetric matrix $\G \succeq 0$ such that $g(\x) = \v_d(\x)^T \G  \v_d(\x)$, where $\v_d(\x) = (1,x_1,\dots,x_N,x_1^2,x_1 x_2,\dots, x_N^2, x_1^d,\dots,x_N^d)$ is the vector of all monomials of degree at most $d$. 
The length of this vector is equal to $\binom{N+d}{N}$.
The matrix $\G$ is called a {\em Gram matrix} \cite{choi1995sums} associated to $g$ and  
computing $\G$ boils down to solving a {\em semidefinite program} (SDP for short) \cite{vandenberghe1996semidefinite}.
We refer to this technique as the {\em Gram matrix method}.
The SOS decomposition $g = g_1^2 + \dots + g_r^2$ is a by-product of a decomposition of the form $g = \v_d(\x)^T \L^T \D \L \v_d(\x)$, with $\L$ being a lower
triangular matrix with nonnegative real entries on the diagonal and
$\D$ being a diagonal matrix with nonnegative real entries, such that $\L^T \D \L = \G $.

\subsection{SOS Lyapunov functions}
\label{sec:prelim_lyapunov}
A positive definite polynomial $f$ is a polynomial which takes only positive values for nonzero input and such that $f(x) = 0$ if and only if $x = 0$.
Given a vector of polynomials $f = (f_1,\dots,f_N)$, let us now consider a discrete-time polynomial system (\ref{def:dyn-sys}).
Let $d$ be the maximal degree of the polynomials $f_1,\dots,f_N$.
The Gram matrix method can be used to certify that such a system with equilibrium at the origin is stable by computing a {\em Lyapunov} polynomial function $V$. 
More precisely, one would like to obtain $V \in \R[\x]_{2 k}$ such that (1) $V$ is  positive definite and (2)  $V - V \circ f  \in \R[\x]_{2 k d}$ is nonnegative.
Note that $V$ is required to be of even degree, otherwise it could not be a positive definite polynomial.

Regarding (1), we look for a decomposition $V(\x) = \v_{k}(\x)^T \G_1 \v_{k}(\x)$, where $\v_k$ is the vector of all monomials of degree up to $k$, and similarly for (2).
To ensure that these two conditions (1) and (2) are fulfilled, we consider the following semidefinite feasibility problem of computing $V$, $\G_1$ and $\G_2$ such that:
\begin{equation}
\label{eq:lyapunovSOS}
\tag{SDP$_k$}
\begin{cases}
V(\x) \qquad  \qquad   \ \ = \v_{k}(\x)^T \G_1 \v_{k}(\x) \,, \quad V(0) = 0 \,, \\
V(\x) - V( f(\x)) = \v_{\ell}(\x)^T \G_2 \v_{\ell}(\x) \,,\\
V \in \R[\x]_{2 k} \,, \quad \G_1, \G_2 \succ 0  \,.
\end{cases}
\end{equation}
The optimization variables of the above  feasibility program~\eqref{eq:lyapunovSOS} are the coefficients of $V$ and the entries of the matrices $\G_1$ and $\G_2$.
Any feasible solution $(V,\G_1,\G_2)$ of this SDP ensures that $V$ is positive definite and $V - V \circ f$ is nonnegative, yielding a stability proof for the initial system (\ref{def:dyn-sys}).

To solve~\eqref{eq:lyapunovSOS}, one relies on numerical solvers implemented in double precision,
for instance SeDuMi~\cite{Sturm98usingsedumi} and SDPA~\cite{Yamashita10SDPA}, or arbitrary-precision solvers such as~SDPA-GMP~\cite{Nakata10GMP}.
However, such numerical solvers provide approximate
nonnegativity certificates, so that the polynomial $V$, and the matrices $\G_1, \G_2$ do not exactly satisfy the equality constraints of \eqref{eq:lyapunovSOS}. 
%
\subsection{From approximate to exact SOS certificates}
\label{sec:prelim_exact}
To overcome these feasibility issues, one can rely on the certification framework from  \cite{magron2018exact}, implemented in the RealCertify software library \cite{magron2018realcertify}.
For certification purpose, we are mainly interested in polynomials with rational coefficients, which belong to the interior of the SOS cone $\Sigma[\x]$, denoted by $\mathring{\Sigma}[\x]$.
Given a polynomial $g \in \mathring{\Sigma}[\x]$ of degree $2d$ with full support, there always exists by \cite[Proposition 5.5]{choi1995sums} a positive definite Gram matrix $\G$ associated to $g$, i.e., such that $g = \v_d(\x)^T \G \v_d(\x)$. 
In this case, one can apply the so-called \texttt{intsos} algorithm, stated in \cite[Algorithm 1]{magron2018exact}, to compute exact weighted rational SOS decompositions for polynomials with rational coefficients, lying in $\mathring{\Sigma}[\x]$.
Namely, we obtain as output a decomposition $g = c_1 g_1^2 + \dots c_r g_r^2$, with $c_1,\dots,c_r$ being nonnegative rational numbers and $g_1,\dots,g_r$ being polynomials with rational coefficients.

For the sake of self-containedness, let us briefly recall how \texttt{intsos} works. 
The algorithm takes as input a polynomial $g$ of degree $2 d$ with rational coefficients and full support, and accuracy parameters $\delta$ and $\delta_{c}$. 
Then, the procedure consists of a perturbation step and an absorption step.
The perturbation step consists of finding a small enough positive rational number $\varepsilon$,  such that the perturbed polynomial $g_{\varepsilon}(\x) := g(\x) - \varepsilon \sum_{\alpha \in \N^N_d} \x^{2 \alpha}$ also lies in $\mathring{\Sigma}[\x]_{2 d}$.
Then, \texttt{intsos} calls an external SDP solver with accuracy $\delta$ to find an approximate Gram matrix $\tilde{\G} \succ 0$ associated to $g_{\varepsilon}$, such that $g_{\varepsilon} (\x) \simeq \v_d(\x)^T \tilde{\G} \v_d(\x)$. 
Here the size (and rank) of $\tilde{\G}$ is $r = \binom{N+d}{N}$.
Then, we obtain an approximate Cholesky's decomposition $\L \L^T$ of $\tilde{\G}$ with accuracy $\delta_c$, and for all $i=1,\dots,r$, we compute the inner product of the $i$-th row of $\L$ by 
$\v_d(\x)$ to obtain a polynomial $s_i$.
For large enough accuracy $\delta$ of the SDP solver and accuracy $\delta_c$ of the  Cholesky's decomposition, the coefficients of the remainder $u =g_{\varepsilon} - \sum_{i=1}^r s_i^2$ become small enough to be ``absorbed'': that is $u + \varepsilon \sum_{\alpha \in \N^N_d} \x^{2 \alpha}$ admits itself an SOS decomposition, and so does $g = u + \varepsilon \sum_{\alpha \in \N^N_d} \x^{2 \alpha} + \sum_{i=1}^r s_i^2$.
If \texttt{intsos} fails to perform this absorption step, then it repeats the same procedure after increasing the accuracy of the SDP solver and Cholesky's decomposition.

We now present our algorithm, called \texttt{exactLyapunov}, similar to \texttt{intsos}, to compute exact weighted rational SOS Lyapunov functions for a given discrete-time polynomial system (\ref{def:dyn-sys}).
\begin{algorithm}
\caption{\texttt{exactLyapunov}}
\label{alg:exactLyapunov}
\begin{algorithmic}[1]
\Require polynomials $f_1,\dots, f_N \in  \mathbb{Q}[\x]$ of degree less than $d$
\Ensure lists $\clist_1$, $\clist_2$ of numbers in $\mathbb{Q}$ and lists $\slist_1$, $\slist_2$ of polynomials in $\mathbb{Q}[\x]$
\State $f \gets (f_1,\dots, f_N)$
\State $k=2$ \Comment{the minimal degree of the SOS Lyapunov function}
\State ok := false
\While {not ok} \label{line:ki}
\If {\eqref{eq:lyapunovSOS} is strictly feasible, i.e., returns a solution $(V, \G_1, \G_2)$ with $\G_1, \G_2 \succ 0$} ok := true
\Else $\ k \gets k + 2$ {\Comment{in order to ensure that $V$ has even degree}}
\EndIf
\EndWhile \label{line:kf}
\State $\clist_1$, $\slist_1$ = \texttt{intsos}$(V)$ \label{line:V}
\State $\clist_2$, $\slist_2$ = \texttt{intsos}$(V - V \circ f)$ \label{line:gradV}
\State \Return $\clist_1$, $\clist_2$, $\slist_1$, $\slist_2$
\end{algorithmic}
\end{algorithm}
\begin{prop}
\label{prop:exactLyapunov}
Let $f = (f_1,\dots, f_N)$ be a vector of rational polynomials of degree at most $d$, with $d$ being odd. 
Assume that \eqref{eq:lyapunovSOS} admits a strictly feasible solution for some $k \in \N$.
Then \texttt{exactLyapunov}$(f_1,\dots,f_N)$ terminates and returns an exact rational weighted SOS Lyapunov function for the discrete-time system (\ref{def:dyn-sys}). 
\end{prop}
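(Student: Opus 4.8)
The plan is to separate the claim into termination and correctness, and in both cases to reduce to the behaviour of \texttt{intsos} recalled above, whose termination and output guarantees hold for inputs lying in the interior $\mathring{\Sigma}[\x]$ of the SOS cone.

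First I would show the \texttt{while} loop on lines~\ref{line:ki}--\ref{line:kf} halts. It tests strict feasibility of \eqref{eq:lyapunovSOS} for $k = 2, 4, 6, \dots$ and stops at the first value returning a solution $(V, \G_1, \G_2)$ with $\G_1, \G_2 \succ 0$; by the standing hypothesis such a $k$ exists, so the loop terminates. The one point to watch is that the loop scans only even half-degrees, and there is no evident monotonicity in $k$ (for instance $V \mapsto V\cdot\|\x\|^2$ raises the degree but need not preserve sign-definiteness of $-\grad V\cdot f$), so the hypothesis must be read as furnishing a strictly feasible $k$ among the degrees actually visited. Since $d$ is odd, $2k-1+d$ is even for every $k$, hence $\ell=(2k-1+d)/2\in\N$ and the second line of \eqref{eq:lyapunovSOS} is well posed at each iteration.

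Upon exit we hold $(V,\G_1,\G_2)$ with $\G_1,\G_2\succ0$. From $V(\x)=\w_k(\x)^T\G_1\w_k(\x)$ and $\G_1\succ0$ one reads that $V$ is a positive definite form --- it is bounded below by $\lambda_{\min}(\G_1)\,\|\w_k(\x)\|^2$, which vanishes only at the origin --- and in particular $V\in\mathring{\Sigma}[\x]$; likewise $-\grad V\cdot f=\v_\ell(\x)^T\G_2\v_\ell(\x)$ with $\G_2\succ0$ is nonnegative and lies in $\mathring{\Sigma}[\x]$. Interior membership is exactly the hypothesis under which \texttt{intsos} is guaranteed to terminate and to return an exact weighted rational SOS decomposition. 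After replacing the numerical solution by a sufficiently close rational $V$ --- which keeps both $V$ and $-\grad V\cdot f$ inside the open cone $\mathring{\Sigma}[\x]$, by continuity of $V\mapsto-\grad V\cdot f$ --- the two calls on lines~\ref{line:V} and~\ref{line:gradV} therefore terminate, and together with the loop this gives termination of \texttt{exactLyapunov}.

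For correctness, the returned lists encode exact identities $V=\sum_i c_{1,i}\,s_{1,i}^2$ and $-\grad V\cdot f=\sum_i c_{2,i}\,s_{2,i}^2$ with nonnegative rational weights $c_{j,i}$ and rational polynomials $s_{j,i}$. The second certifies condition~(2), namely $-\grad V\cdot f\ge0$, exactly. The first certifies $V\ge0$; to upgrade this to condition~(1) I would use the perturbation step of \texttt{intsos}, which produces a rational $\varepsilon>0$ with $V-\varepsilon\sum_{|\alpha|=k}\x^{2\alpha}$ still in $\mathring{\Sigma}[\x]$, so that an SOS certificate of this perturbed form exhibits $V\ge\varepsilon\sum_{|\alpha|=k}\x^{2\alpha}$ and hence shows $V$ is a positive definite form. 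Conditions~(1) and~(2) together make $V$ an SOS Lyapunov function, and the four lists constitute the desired exact rational weighted SOS certificate of stability for~(\ref{def:dyn-sys}).

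The step I expect to be the main obstacle is checking that the two inputs to \texttt{intsos} genuinely satisfy its hypotheses. Two issues stand out: (i) \texttt{intsos} is stated for polynomials of \emph{full support}, whereas $V$ is a homogeneous form supported on degree-$2k$ monomials and $-\grad V\cdot f$ need not be homogeneous, so one must invoke the graded analogue of the interior/full-support argument rather than the verbatim statement; and (ii) the exact certification of positive definiteness of $V$ (not merely $V\ge0$) must be arranged, which is where the rational perturbation $\varepsilon$ and the strictness $\G_1\succ0$ are essential. Once these are secured, the rest is bookkeeping over the outputs of the two \texttt{intsos} calls.
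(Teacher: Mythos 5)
Your proof is correct and follows essentially the same route as the paper's: the loop terminates by the feasibility hypothesis, the two calls to \texttt{intsos} terminate because strict feasibility of \eqref{eq:lyapunovSOS} hands each input a positive definite Gram matrix (the paper cites \cite[Proposition 3.5]{magron2018exact} for precisely this situation, which also disposes of your full-support worry), and the returned lists are the exact rational weighted SOS decompositions. The only point where you go beyond the paper is the upgrade from $V \ge 0$ to positive definiteness via the perturbation term $\varepsilon\sum \x^{2\alpha}$; the paper's proof does not address this explicitly and simply relies on $\G_1 \succ 0$ from the SDP solution.
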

\begin{proof}
The loop going from line~\lineref{line:ki} to line~\lineref{line:kf} increases the search degree for the SOS Lyapunov function. 
By assumption, \eqref{eq:lyapunovSOS} admits a strictly feasible solution for some  large enough $k \in \N$, so this loop  eventually terminates. 
In particular, we obtain an SOS Lyapunov function $V$ of degree $2 k$ satisfying the constraints of  \eqref{eq:lyapunovSOS}.
Then, one applies the perturbation-absorption procedure \texttt{intsos} to obtain an exact rational weighted SOS decomposition of $V$ at  line~\lineref{line:V}, and of $(V - V \circ f)$ at line~\lineref{line:gradV}.
By \cite[Proposition 3.5]{magron2018exact},  the procedure \texttt{intsos} always terminates in both cases, since $V$ and $V - V \circ f$ have associated positive definite Gram matrices. 
The outputs $\clist_1$ and $\slist_1$ respectively contain a list of nonnegative rational numbers $[c_1,\dots,c_r]$ and rational polynomials $[s_1,\dots,s_r]$ such that $V = c_1 s_1^2 + \dots + c_r s_r^2$. 
Similarly, we obtain an exact rational weighted SOS decomposition for $(V - V \circ f)$.
\end{proof}

\section{Minlog system}
\label{sec:minlog}
In this section, we investigate how to formally prove stability of polynomial systems, by verifying the output of \texttt{exactLyapunov} within $\minlog$. We first introduce some preliminary concepts of the proof system \texttt{Minlog}, the notion of totality and the concept of inductive definitions, before we give the details how the formalization works in constructive setting. In this work the analysis and formalization are done within of the framework of constructive analysis with special focus on the concrete and efficient computational realization as well as formal verification. The constructive analysis by itself is a mathematical analysis, which deals with computational content of mathematical proofs in an informal manner. In order to obtain the certified programs there is no way around of the usage of higher-order programming languages \cite{Buss1998-handbook,geuvers2000constructive,fleuriot2000mechanization,harrison2012theorem,Krebbers2013-realanalysis}.  
\subsection{Necessary Preliminaries}
One of the possible interpretations of constructive analysis is due to Brouwer, Heyting and Kolmogorov (sometimes called BHK interpretation). 
Constructive analysis states the logical rules in a way which can be algorithmically interpreted as a mathematical proof \cite{Buss1998-handbook}. Let $P$ and $Q$ be some arbitrary formulas. The most important BHK rules are:
\begin{itemize}
	\item Proving the implication $P \Rightarrow Q$ means finding an algorithm that transforms a proof of $P$ into a proof of $Q$.
	\item To prove $\exists \gamma \spc P[\gamma]$, an algorithm computing $\gamma$ and the proof that $P[\gamma]$ holds are required.
	\item To prove $\forall \gamma \in A \spc P[\gamma]$, an algorithm is required that converts $\gamma \in A$ into a proof of $P[\gamma]$.
\end{itemize}

The interactive proof system \texttt{Minlog} has been developed with the special intention of  extracting programs from mathematical proofs. The computational model consists of the theory of the partial continuous functionals in sense of Scott-Ershov 
and implements the theory of computable functionals \cite{Scott1982-domains}.
The atomic constructions are done by introduction of free algebras as types which are given by its constructors. 
For example, the free algebra for natural numbers is given by the constructors zero and successor, such that all other constructed elements  are also natural numbers.
Generally the types are built from the object types, type variables and type constants using algebra type formation and arrow type formation.
The later arrow type formation is denoted by ``$\rho\Rightarrow\sigma$'', where $\rho$ and $\sigma$ are two types.
The object types are usually types constructed by some free algebra variables. The type variables are specific variables of an unknown type. Consider for example a free algebra of lists. 
It is clear that the basic list manipulation like appending an element to some list or getting an element from the list should work independently of the type of the elements. 
Therefore the list manipulations are written in terms of type variables. 
The arrow type formation  $\rho\Rightarrow\sigma$ constructs a new type from the previously defined base types $\rho$ and $\sigma$, e.g., the successor of a natural number of type $\texttt{nat}$ is viewed as a function of type $\texttt{nat}\Rightarrow\texttt{nat}$.
The computation is implemented efficiently by the usage of the so-called normalization-by-evaluation. The reason of usage of typed theories is that the normalization-by-evaluation technique, which is applied to the proof terms, needs typed formulas.  As an example consider the construction of the type \texttt{nat} of natural numbers $\mathbb{N}$ by stating that \texttt{Zero} is of type \texttt{nat} and \texttt{Succ} is a mapping of type \texttt{nat$\Rightarrow$nat}. The normalization of the term 1+2 would result in \texttt{Succ(Succ(Succ Zero))} which is clearly of type $\texttt{nat}$ . The following types for the numbers with constructors and destructors are contained in the standard \texttt{Minlog} library:
\begin{itemize}
	\item \texttt{pos:One,SZero,SOne} (positive numbers $\mathbf{P}$ (binary))
	\item \texttt{nat:Zero,Succ} (natural numbers $\mathbf{N}$ )
	\item \texttt{list:Nil,Cons} 
	\item \texttt{int:IntP,IntZero,IntN} (integer numbers $\mathbf{Z}$)
	\item \texttt{rat:RatConstr} and destructors \texttt{RatN,RatD} (rational numbers $\mathbf{R}$)
	\item \texttt{rea:RealConstr} and destructors\\ \texttt{RealSeq,RealMod} (real numbers $\mathbf{R}$) 

\end{itemize} 
From this point onwards we use boldface symbols for numbers whenever we want to reason formally in \minlog.

 The  user-defined computation and corresponding rewriting rules are assigned to the so-called program constants. Usually the program constants are recursively written typed programs. Consider for example the multiplication of two natural numbers. The corresponding program constant \texttt{NatTimes} is of type \texttt{nat$\Rightarrow$nat$\Rightarrow$nat} and the computational rules are given recursively as:
\begin{align*}
&\texttt{NatTimes}\spc  n \spc \texttt{Zero} \rightarrow  \texttt{Zero} \\
&\texttt{NatTimes} \spc n \spc \left(\texttt{Succ} \spc m\right) \rightarrow \texttt{NatTimes}\spc n \spc (m + n).
\end{align*}
As an additional example consider the strict inequality relation $<$ again for the natural numbers. The program constant \texttt{NatLt} is of type \texttt{nat$\Rightarrow$nat$\Rightarrow$bool} and the computational rules are:
\begin{align*}
&\texttt{NatLt}\spc   n \spc \texttt{Zero} \rightarrow  \texttt{False} \\
&\texttt{NatLt} \spc 0 \spc (\texttt{Succ} \spc n) \rightarrow \texttt{True}\\
&\texttt{NatLt} \spc \left(\texttt{Succ} \spc n\right) \spc \left(\texttt{Succ} \spc m\right) \rightarrow \texttt{NatLt} \spc  n \spc m.
\end{align*}
Usually the computer proofs incorporate sometimes trivial goals and therefore must be proven automatically. Thus the so-called rewriting rules are important for the automation of theorem proving. For example assume that one has proved that $\forall a \in \mathbf{Q} \spc a+0=a$. Then one can add the machine-generated computation rule $a+0 \rightarrow a$ to the library of  rewriting rules. Every time the normalization process is done, if some rational $a+0$ occurs then  the zero is canceled automatically.
\subsection{The notion of totality}
In order to establish the connection between proofs and programs, each program constant in \texttt{Minlog} has to fulfill the totality requirement, i.e., the termination of the program constant under any correct input. For a comprehensive description of totality, we refer the interested reader to \cite[Chapter~8.3]{Stoltenberg1994-domain} and \cite{Berger1993-total}.   
 In order to use the whole machinery of the computational model in \texttt{Minlog}, program constants are required to be total functions. The totality is usually proved by induction directly after the definition of the program constant. For example assume that for some natural numbers $n,m$, the program constant \texttt{NatPlus n m} is recursively defined as:
\begin{itemize}
	\item \texttt{NatPlus n Zero $\rightarrow$ $n$} 
	\item \texttt{NatPlus n Succ m $\rightarrow$ Succ(NatPlus n m)}.
\end{itemize}
The totality goal is: $\forall \texttt{n,m} \spc \texttt{TotalNat n} \rightarrow \texttt{TotalNat m} \rightarrow \texttt{TotalNat NatPlus n m}$. By induction on $\texttt{m}$ and by definition $\phantom{v}\texttt{TotalNat(NatPlus n Zero)} \phantom{v}$ is equivalent to $\texttt{TotalNat(n)}$ which follows from the assumptions. The induction step 
is $\texttt{TotalNat(n +Succ m)}$ which is equivalent by definition to $\texttt{TotalNat(Succ(n +m)}$. Finally the proof tree is split into two goals $\texttt{TotalNat(Succ(n)}$ and $\texttt{TotalNat(n+m)}$. Clearly the successor of a natural number is total and the second goal holds due to the induction hypothesis.
In the present work, the totality proofs of all program constants have been shown and implemented.

\subsection{Inductive definitions and theorem proving}
Before one starts to prove theorems with the help of the computer, one has to provide  definitions which are relevant for the problem statement. In \texttt{Minlog} it is done by introducing the so-called inductively defined predicate constant (IDPC). Informally speaking, these are formations of implications leading to the proposition to be defined. As an example assume that one wants to provide a definition for rational numbers which lie in a unit interval. Then the corresponding IDPC \texttt{RatInUnitI} is given by:
\begin{equation*}
\forall a \in \mathbf{Q} \spc 0 \le a \rightarrow a \le 1 \rightarrow \texttt{RatInUnitI} \left[a\right]
\end{equation*}
which means semantically that to show $ \texttt{RatInUnitI} \spc a$ one has to show that $0 \le a$ and $a \le 1$. Additionally the definitions can also be provided inductively. As an example consider the definition of an even natural number given by the IDPC \texttt{Even}. Inductively one can proceed as follows:
\begin{align*}
&\texttt{Even} \left[\texttt{Zero}\right]\\
&\forall n \left( \texttt{Even}[n] \rightarrow \texttt{Even}[n+2]\right).
\end{align*}
Here the definition of the even natural number is given in terms of other natural number (in this case \texttt{Zero}) which is known to be even. In this work most of the IDPCs are given explicitly as in case of \texttt{RatInUnitI}.


\subsection{Preliminaries for the formalization in constructive setting}
This section is based on the foundation of  constructive analysis with witnesses \cite{Schwichtenberg2006-constructive}. The witnesses are algorithms that certify certain statements, which are propagated through the mathematical proofs. For example, informally, a witness for a  strictly increasing continuous function $f:I \rightarrow \mathbb{R}$  would be a map $\tau: \mathbb{Q} \rightarrow \mathbb{Q}$ such that for any $\varepsilon > 0$ and $a,b \in I$, one has  $ \spc 0<\tau(\varepsilon)<f(y)-f(x)$ whenever $0<\varepsilon<y-x$. By relying on the axiom of choice \cite[p.~89]{Ye2011-SF} or the dependent choice axiom \cite{Schwichtenberg2006-inverting}, a continuous inverse function $f^{-1}$ can be constructed in terms of the witness $\tau$.

All presented concepts are implemented within of \texttt{Minlog}, and   furthermore we extended the standard library of \texttt{Minlog}  by the necessary notions and theorems for vectors, continuous functions on multiple variables, vector-valued functions as well as auxiliary theorems for real numbers.
The  most important object in the current setting is real number. In \texttt{Minlog} the reals are built by introducing typed variables \texttt{as:$\mathbf{N}\Rightarrow\mathbf{Q}$} , \texttt{M:$\mathbf{P}\Rightarrow\mathbf{N}$}, defining the constructor by the arrow type formation \texttt{RealConstr: $(\mathbf{N}\Rightarrow\mathbf{Q}) \Rightarrow (\mathbf{P}\Rightarrow\mathbf{N})\Rightarrow\mathbf{R}$} and introducing the IDPC \texttt{Real}, i.e., a real $x$ is given by a regular Cauchy sequence of rationals $(a_n)_n$ (encoded by the typed variable \texttt{as}), denoted as \texttt{Cauchy},  with a given monotone modulus $M:\mathbf{P}\rightarrow\mathbf{N}$, denoted as \texttt{Mon}, that is $\forall p,m,n \spc |a_{n} - a_{m}|\le 2^{-p}$ whenever $M(p) \le n,m$ and $\forall p,q \spc p \le q \rightarrow M(p) \le M(q)$. Occasionally, we will also denote  a $2^{-p}$-approximation of the real $x=\left((a_n)_n,M\right)$ as $x^\lambda(p) $, which is defined as a rational $a_{M(p)}$.  In order to work with the IDPCs properly and realize natural deduction \cite{Schwichtenberg2009-program}, the so-called auxiliary elimination rules have to be proven, which roughly speaking are deconstructors of the IDPCs and may be considered as an analogue of an informal statement: "By the definition of  proposition $P$, proposition $Q$ holds". The next statement is an important example for such auxiliary elimination rules:     
\begin{elim}[\texttt{CauchyElim}] \label{thm:CauchyElim}
	\[
	\forall (a_n)_n,M \spc \mbox{\texttt{Cauchy}} \left[(a_n)_n, M \right]  \rightarrow \forall p,m,n \spc \left( M(p) \le n \rightarrow M(p) \le m  \rightarrow |a_{n} - a_{m}|\le 2^{-p} \right).
	\]
\end{elim}

A real $x=((a_n)_n,M)$ is called nonnegative if $\forall p \spc 0 \le a_{M(p)} + 2^{-p}$. For two real numbers $x$,$y$ denote $x \le_r y$ whenever the real $y-x$ is nonnegative. Similarly, a real is strongly nonnegative if $\forall n \spc 0 \le a_n$ and $x \le_s y$ whenever $y-x$ is strongly nonnegative.
For a real $x$ denote $x \in_p \mathbf{R}^+ $ if $2^{-p} \le a_{M(p+1)}$. We denote strict inequality of two reals $x,y$ by $x <_p y$, if there exists $p$ such that $y-x \in_p \mathbf{R}^+ $. Two reals $x=((a_n)_n,M)$ and $y=((b_n)_n,N)$ are equal ($x=_r y$), if 
$\forall p\in \mathbf{P}\spc|a_{M(p+1)}-b_{N(p+1)}|\le 2^{-p}$. Additionally, two reals are strongly equal, if $\forall n \spc a_n=b_n$, which is denoted by $x=_s y$. In some situations the strong versions of equality and inequality may ease the derivation of the corresponding non-strong requirements.  
The next three characterization theorems are often useful for operating with (in-)equalities within  our framework.

\begin{lem}[\texttt{RealEqChar}] \label{RealEqChar}
	Given two reals $x=((a_n)_n,M)$ and $y=((b_n)_n,N)$
	the following are equivalent:
	\begin{enumerate}
		\item $x=_r y$
		\item $\forall p \spc \exists n_0 \forall n \ge n_0 \spc \left(|a_n - b_n| \le 2^{-p}\right).$
	\end{enumerate}
\end{lem}
 
\begin{lem}[\texttt{RealNNegChar}]\label{RealNNegChar}
	For a real $x=((a_n)_n,M)$ the following are equivalent:
	\begin{enumerate}
		\item $0 \le_r x$
		\item $ \forall p \spc \exists n_0 \spc \forall n\ge n_0 \spc \left(-2^{-p}\le a_n\right)$.
	\end{enumerate}
\end{lem}
\begin{rem}
	The nonnegativity on real numbers is realized via the inductively defined predicate as opposed to the inequality on rational numbers, which is implemented as a simple program constant. Thus by the following IDPC:
	\[
	\texttt{ $\forall$x Real[x] $\rightarrow$  $\forall$ p 0 $\le$ x seq(x mod p)$+2^{-\texttt{p}} \rightarrow$ RealNNeg[x] } 
	\]
	in \minlog, where \texttt{seq} and \texttt{mod} are the display tokens for the deconstructors of the type \texttt{real}. Similarly, the IDPC for the equality is read as:
	\begin{align*}
	&\texttt{ $\forall$x,y Real[x] $\rightarrow$ Real[y] $\rightarrow$} \\ & \texttt{ $\forall$ p |x seq (x mod (p+1) - y seq (y mod (p+1))| $\le 2^{-\texttt{p}} \rightarrow$ RealEq[x,y]} .
	\end{align*}
	
\end{rem}
\begin{lem}[\texttt{RealPosChar}]\label{RealPosChar}
	For a real $x=((a_n)_n,M)$ the following are equivalent:
	\begin{enumerate}
		\item $\exists p \spc x \in_p \mathbf{R}^+$ 
		\item $\exists q,m \spc \forall n\ge m \spc \left(2^{-q}\le a_n\right)$.
	\end{enumerate}
\end{lem}

Additionally, the following important lemmata are provided to deal with the inequalities. Observe the necessity of the special shifts in the witness $p$ in the corresponding lemmata that involve strict inequalities.
\begin{lem}[\texttt{RealLeTrans}]
	$\forall \spc x,y,z \spc x \le_r y \rightarrow y \le_r z \rightarrow x \le_r z $.
\end{lem}


\begin{lem}[\texttt{RealPosCompat}]
	$\forall \spc x,y,p \spc x =_r y \rightarrow x \in_p \mathbf{R}^+ \rightarrow y \in_{p+2} \mathbf{R}^+ .$
\end{lem}




\begin{lem}[\texttt{RealLeLtTrans}] \label{RealLeLtTrans}
	$\forall \spc x,y,z,p \spc x \le_r y \rightarrow y <_p z \rightarrow x <_{p+3} z.$
\end{lem}
\begin{lem}[\texttt{RealNotGtIsLe}] \label{RealNotGtIsLe}
	$\forall \spc x,y \in \mathbf{R}\spc \lnot\left( \exists p \spc y <_p x \right) \rightarrow x \le_r y.$
\end{lem}





The next definitions introduce the concept of continuous functions. We will first introduce the definition due to \cite{Schwichtenberg2012-constr-analys} (\texttt{Cont}) and later define the concept of the completion monad \cite{Oconnor2008-certified} (\texttt{UCF}) in our framework.

\begin{defn}[\texttt{Cont}]\label{def:Cont}
	A uniformly continuous function $f:I \rightarrow \mathbf{R}$ on a compact interval $I$ with rational endpoints is given by
	\begin{enumerate}
		\item an approximation map $h_f :  (I \cap \mathbf{Q} ) \times \mathbf{N} \rightarrow \mathbf{Q} $ and a map $\alpha_f: \mathbf{P} \rightarrow \mathbf{N}$ such that  $(h_f(a,n))_n$ is a Cauchy sequence with modulus $\alpha_f$ for any $a \in (I \cap \mathbf{Q} )$
		\item a modulus  of continuity $\omega_f:\mathbf{P} \rightarrow \mathbf{P}$, which satisfies 
		\begin{equation*}
		|a-b| \le 2^{-\omega_f(p) +1} \rightarrow |h_f(a,n)-h_f(b,n)|\le 2^{-p}
		\end{equation*}		
	\end{enumerate}
	 for any $a,b \in (I \cap \mathbf{Q} )$ with monotone $\alpha_f$ and $\omega_f$.
\end{defn}
Note that a continuous function in sense of \texttt{Cont} attains only rational values for  efficiency reasons. Later on, the application procedure will allow us to generate real numbers from such continuous functions.
As an example, consider the representation of the function $f(x)=x^2$ on the interval $I = [0,2]$ in \texttt{Minlog}:\\
$\texttt{ContConstr}\left[0,2,\texttt{([a,n]a*a)}, \texttt{([p] 0)}, \texttt{([p] p+3)}\right]$
where \texttt{ContConstr} is a constructor of a function of type \texttt{cont} 
 which takes as the input the rational endpoints of the interval $I = [0,2]$, the approximating map \texttt{([a,n]a*a)} of type \texttt{$\mathbf{Q} \Rightarrow \mathbf{N} \Rightarrow \mathbf{Q} $}, the mapping $\alpha_f(p):=0$ of  type \texttt{$\mathbf{P} \Rightarrow \mathbf{N} $} as  modulus of convergence and the mapping $\omega_f(p):=p+3$ of type \texttt{$\mathbf{P} \Rightarrow \mathbf{P} $} as modulus of continuity. Along with the constructor, a proof of \texttt{Cont} has to be provided for this representation. That is, according to the definition of \texttt{Cont}
\begin{itemize}
	\item \texttt{([a,n]a*a)} is \texttt{Cauchy} with the modulus $\alpha_f(p):=0$ because the values of the sequence $(h_f(a_n,n))_n:=(a^2)_n$ are independent of $n$.
	\item The modulus of continuity $\omega_f(p):=p+3$ is correct and the corresponding goal  \[\forall a \in [0,2] \spc b \in [0,2] \spc p \in \mathbf{P} \spc \left( |a-b| \le 2^{-p-2} \rightarrow |a \cdot a-b \cdot b| \le 2^{-p}\right)\]
	holds because $|a \cdot a - b \cdot b| \le |(a-b)(a+b)| \le 2^{-p-2}\cdot|(a+b)| \le 2^{-p-2} \cdot 4 \le 2^{-p}$.\\
	Note that $(a+b) \le 4$ holds because   $a,b \in [0,2]$.
	\item $\alpha_f : p \mapsto 0$ and $\omega_f : p \mapsto p+3$ are both monotone.
\end{itemize}

 Note that the definition of the approximation map $h_f$ in \texttt{Cont} differs from the definition in \texttt{Minlog} in sense that the approximation maps in \texttt{Minlog} are defined as programs of type $\mathbf{Q}\Rightarrow \mathbf{N} \Rightarrow \mathbf{Q}$, omitting the information of the interval $I$. However this information is provided on the proof level by attaching the assumptions that the rational inputs have to belong to the specific interval $I$.

\begin{defn}[\texttt{Application}]\label{def:Application}
	The application of a uniformly continuous function $f:I\rightarrow \mathbf{R}$ to a real $x=((a_n)_n,M))$ in $I$, is defined to be a Cauchy sequence
	$(h_f(a_n,n))_n$ with modulus $\max(\alpha_f(p+2),M(\omega_f(p+1)-1)$ and is denoted by $f(x)$.
\end{defn}
\begin{lem}[\texttt{ApplicationCorr}]\label{thm:ApplicationCorr}
	For a uniformly continuous function $f:I\rightarrow \mathbf{R}$ and a real $x=((a_n)_n,M))$ in $I$, the Cauchy sequence emanating from \texttt{Application} and accompanied with the modulus $\max(\alpha_f(p+2),M(\omega_f(p+1)-1)$ is a real number.
\end{lem}

\begin{rem}
	The definition of the application incorporates the so-called extension theorem due to Bishop \cite[p.~91]{Bishop1985-constr-analysis}. In \texttt{Minlog} the application is realized by a program constant and allows one to address the lifting from rational mappings (see Definition \ref{def:Cont}) to the reals, which is at some point similar to the lifting technique of the completion monad for the constructive reals in \texttt{Coq} \cite{OConnor2007-monad}, which will be introduced later. Additionally, note that for the continuous function $f:I \rightarrow \mathbf{R}$ and real numbers $x,y \in I$ under the usage of the program \texttt{Application}, one is able to prove:
	\[
		|x-y| \le_r 2^{-\omega_f(p)} \rightarrow |f(x)-f(y)|\le_r 2^{-p} \,,
	\]
	which coincides to another usual existing notions of continuity in effective analysis \cite{Bishop1985-constr-analysis,Ko1991,Kushner1999-markov,Weihrauch2012-computable-analys}.
\end{rem}

\begin{lem}[\texttt{ApplicationCompat}]\label{thm:ApplicationCompat}
	For a uniformly continuous function $f:I\rightarrow \mathbf{R}$:
	\[
		\forall x,y \left(x=_r y \rightarrow f(x) =_r f(y) \right).
	\]
\end{lem}

To be able to operate on whole $\mathbf{R}$, we introduce the following IDPC for the global continuous functions:
\begin{idpc} [\texttt{Contex}] \label{idpc:contex}
	Consider the extended type \texttt{contex}:$(\mathbf{Q}\Rightarrow\mathbf{N}\Rightarrow \mathbf{Q}) \Rightarrow (\mathbf{Q}\Rightarrow\mathbf{Q}\Rightarrow\mathbf{P} \Rightarrow \mathbf{N}) \Rightarrow (\mathbf{Q}\Rightarrow\mathbf{Q}\Rightarrow\mathbf{P} \Rightarrow \mathbf{P})$, i.e., a triple $(h_{f_{ex}},\alpha_{f_{ex}},\omega_{f_{ex}})$ such that \[\forall [c,d] \spc \texttt{Cont} [c,d,h_{f_{ex}},\alpha_{f_{ex}}(c,d),\omega_{f_{ex}}(c,d)].\]
\end{idpc}

\begin{lem}[RealBound]\label{thm:RealBound}
	\begin{align*}
		\forall x (\texttt{Real} \left[x\right]  \rightarrow \exists a \in \mathbf{Q} \spc \forall n \spc |x \spc \texttt{seq} \spc n| \le a).
	\end{align*}
\end{lem}
The computational content of the Theorem \ref{thm:RealBound} is a program constant \texttt{cRealBound}.


%
%
%
%

As an alternative to the IDPC \ref{idpc:contex} we were also able to realize the concept of continuous function via the notion of the completion monad. Consider the maps $h_c:\mathbf{Q} \Rightarrow \mathbf{R}, \omega_{h_c}: \mathbf{P} \Rightarrow \mathbf{P}$ and the following IDPC:
\begin{idpc}[\texttt{UCF}]\label{def:ucf}
	\begin{align*}		
		&\forall h_c, \omega_{h_c} \left( \forall a\spc \texttt{Real} \left[h_c(a)\right] \rightarrow \forall a,b,p \left(|a - b | \le 2^{-\omega_{h_c}(p)} \rightarrow |h_c(a) - h_c(b)| \le_r 2^{-p}\right) \right)\\
		&\rightarrow \texttt{UCF} \left[\left(h_c,\omega_{h_c}\right)\right].
	\end{align*}
\end{idpc}
Now to realize the desired map of type $ \mathbf{R} \Rightarrow \mathbf{R}$ (such realization is sometimes referred as lifting) the tuple $f_c:=\left(h_c,\omega_{h_c}\right)$ is accompanied with the following program constant:
\begin{prog}[\texttt{UCFApplication}]\label{UCFapp}
	Consider the sequence of real numbers $(y_n)_n:=(h_c(x \spc \texttt{seq}\left(x \spc \texttt{mod}(\omega_{h_c}(n)+1)\right)))_n$ and assign the following real number:
	\begin{align*}
		f_c(x) \mapsto \texttt{RealConstr}\left[\left( y_{y_n \texttt{mod} \spc n + 2} \spc \texttt{seq} \spc n
		\spc  \right)_n ,(p+1)_p\right].
	\end{align*}	
\end{prog}
Similarly as in case of the Definition \ref{def:Application} we were able to show the correctness (\texttt{UCFApplicationReal}) and the compatibility with the equality (\texttt{UCFApplicationCompat}) respectively.
As an example of such construction consider the following realization of the approximation of the square roots, i.e., given some positive rational number $a$, $a_0=1$ and the sequence $a_{n+1}:= \frac{1}{2} \left(a_n + \frac{a}{a_n}\right)$ together with the computational content \texttt{cSqrtCauchyMod} of the proof of the following statement:
\begin{align*}
\texttt{SqrtCauchyMod:} \spc	\forall a \left( 0 < a \rightarrow \exists M \spc \texttt{Cauchy}[(a_n)_n,M]\right).
\end{align*}
We define the required maps
 \begin{align} 
&h_{\sqrt{\phantom{ }}}(a) := \begin{cases}
	\texttt{RealConstr} \spc \left((a_n)_n, \texttt{cSqrtCauchyMod} \spc a\right) , & \text{for } 0<a\\
	0, & \text{else}\label{fct:sqrt}\\
\end{cases}\\
&\omega_{\sqrt{\phantom{ }}}(p):= 2p\label{fct:sqrt2}
\end{align}
and prove that these maps indeed satisfy IDPC \ref{def:ucf} alongside with the actual properties of the square root e.g. $\forall x \left(0\le_r x \rightarrow x=_r\sqrt{x}\cdot \sqrt{x}\right)$.

The vectors, functions on multiple variables and vector-valued functions can be formalized in a straightforward way by utilizing the algebra of lists. The following representation allows to unfold the vector elements and their properties inductively.
\begin{defn}[Free algebra \texttt{list}]
		A list \texttt{xls} containing the elements of type $\alpha$ is either :
		\begin{itemize}
			\item \texttt{(Nil alpha)}, also called empty,
			\item constructed \texttt{xl::xls} with head \texttt{xl} and tail \texttt{xls}.
		\end{itemize}
\end{defn}
\begin{rem}
	A list contaning one element \texttt{xl} is denoted by $\texttt{xl:}$. For an arbitrary list \texttt{xls} we will denote the access to the i-th element of the list as $\texttt{xls}^i$.	
\end{rem}

\begin{defn}[\texttt{CauchyVector}]
		Consider the lists $\pmb v:\texttt{list }$$(\mathbf N \Rightarrow \mathbf Q) $, $M_{\pmb v}:\texttt{list }$$(\mathbf P \Rightarrow \mathbf N) $. A pair $\left(\pmb v,M_{\pmb v}\right)$ is \texttt{CauchyVector} if $|\pmb v|=|M_{\pmb v}|$ and either:
		\begin{itemize}
			\item $\pmb v$ and $M_{\pmb v}$ are empty,
			\item the pair (\texttt{head $\pmb v$}, \texttt{head $M_v$}) is \texttt{Cauchy} and the pair (\texttt{tail $\pmb v$}, \texttt{tail $M_{\pmb v}$}) is \texttt{CauchyVector}.
		\end{itemize}
\end{defn}

\begin{defn}[\texttt{MonVector}]
	A list $M_{\pmb v}:\texttt{list }$$(\mathbf P \Rightarrow \mathbf N) $ is  \texttt{MonVector} if either:
	\begin{itemize}
		\item $M_{\pmb v}$ are empty,
		\item the map \texttt{head $M_{\pmb v}$} is \texttt{Mon} and the list \texttt{tail $M_{\pmb v}$} is \texttt{MonVector}.
	\end{itemize}
\end{defn}

\begin{defn}[\texttt{RealVector}]
	For the lists $\pmb v:\texttt{list }$$(\mathbf N \Rightarrow \mathbf Q) $, $M_{\pmb v}:\texttt{list }$$(\mathbf P \Rightarrow \mathbf N) $ of length $n$,  a vector $\pmb x=(\pmb v,M_{\pmb v}) \in \mathbf{R}^n$ if the pair $(\pmb v,M_{\pmb v})$ is \texttt{CauchyVector} and $M_{\pmb v}$ is \texttt{MonVector}.
\end{defn}For a vector $\pmb x=(\pmb v,M_{\pmb v}) \in \mathbf{R}^n$ the quantity $\pmb x \spc \texttt{dim}:= |v|$ $\left( \text{if } |\pmb v|=|M_{\pmb v}|\right)$ gives the number of the components of the vector. 
Two vectors $\pmb x, \pmb y$ are called compatible if both are \texttt{RealVector} and $\pmb x \spc \texttt{dim}=\pmb y \spc \texttt{dim}$. Two vectors $\pmb x,\pmb y$ are called equal ($\pmb x=_r \pmb y$) if they are compatible and have equal components. 
We provide the reflexivity, transitivity and symmetry theorems for this relation. 
%

Similarly to Definition \ref{def:Cont}, a function on multiple variables can be defined as follows:
\begin{defn}[\texttt{Contmv}]\label{def:Contmv}
	A uniformly continuous function of multiple variables $g:B \rightarrow \mathbf{R}$ on a compact ball $B=(\pmb e_0,R) \subset \mathbf{R}^n$ with rational center $\pmb{e}_0$ and rational radius $R$ is given by
	\begin{enumerate}
		\item an approximation map $h_{g}:(B \cap \mathbf{Q}^n ) \times \mathbf{N} \rightarrow \mathbf{Q} $ and a map $\alpha_{g}: \mathbf{P} \rightarrow \mathbf{N}$ such that $(h_{g}(\pmb e,n))_n$ is a Cauchy sequence with modulus $\alpha_g$ for any $\pmb e \in (B \cap \mathbf{Q}^n )$
		\item a modulus $\omega_{g}:\mathbf{P} \rightarrow \mathbf{P}$ of continuity, which satisfies 
		\begin{equation*}
		\|\pmb e_1- \pmb e_2\|_\infty \le 2^{-\omega_{g}(p) +1} \rightarrow |h_{g}(\pmb e_1,n)-h_{g}(\pmb e_2,n)|\le 2^{p}
		\end{equation*}		
	\end{enumerate}
	for any $\pmb e_1,\pmb e_2 \in (B \cap \mathbf{Q}^n )$ with monotone $\alpha_{g}$ and $\omega_{g}$.
\end{defn}
\begin{rem}
	The choice of the norm $\| \cdot \|$ influences the values of moduli $\alpha_g$ and $\omega_g$. Usually, one would prefer to use either $1$-norm or $\infty$-norm for  computational purpose, since for rational vectors the values of the norm are also rationals, which is not the case, e.g., for the Euclidean norm.
\end{rem}


%
From here there are multiple ways to define vector-valued functions that act on full $\mathbf{R}^n$. 
In order to ease the derivation, we decided to implement maps as a type $\pmb f: \mathbf{R}^n \Rightarrow \mathbf{R}^m$ that are compatible with the equality,i.e., 
\begin{defn}[\texttt{ProperMap}]\label{Cmap}
	Given the map $\pmb f: \mathbf{R}^n \Rightarrow \mathbf{R}^m$, we call the tuple $(\pmb f,n,m)$ proper if
	\begin{enumerate}
		\item $\forall \spc \pmb x,n,m \left(\pmb x \in \mathbf{R}^n \rightarrow \pmb f(\pmb x) \in \mathbf{R}^m \right)$
		\item $\forall \spc \pmb x,\pmb y \left( \pmb x =_r \pmb y \rightarrow   \pmb f(x) =_r \pmb f(\pmb x)  \right)$.
	\end{enumerate}
\end{defn}
The first property guarantees that the dimensions and the approximations are provided correctly. The second property is the compatibility of the map $\pmb{f}$ with the equality (in sense of vectors). Due to the fact that all continuous functions as well as the operations $+,-,\cdot,/$ on real numbers are compatible with the equality, it allows to parse function terms in a more convenient way without the necessity of deriving appropriate moduli of  continuity for some arbitrary vector-valued continuous function fully. Furthermore, the terms may origin from different sources, i.e., given by the Definition \ref{def:Cont} (\texttt{Cont}) or by the Definition \ref{def:ucf} (\texttt{UCF}) and can be combined together. This definition may also be seen as an abstract interface and reduces the difficulty of interactive theorem proving drastically.

As a certain non-trivial example consider the proof of compatibility with the equality of the functions of type $\mathbf{R}^1 \Rightarrow \mathbf{R}^1$ defined by the completion monad (\texttt{UCF}).
\begin{thm}[\texttt{UCFApplicationCompat}]
\begin{align}
	\forall f_c \left(\texttt{UCF} \spc [f_c] \rightarrow x=_r y \rightarrow f_c(x) =_r f_c(y)  \right)
\end{align}	 
\end{thm}
\begin{proof}
	Assumming that $f_c=(h_c,\omega_c)$, $x=((a_n)_n,M)$, $y=((b_n)_n,N)$ and $x=_r y$, we define following real numbers:
	\begin{itemize}
		\item $z_1 := h_c(a_{M(\omega(n)+1)})$
		\item $z_2 := h_c(b_{N(\omega(n)+1)})$
	\end{itemize}
	Then the goal $f_c(x) =_r f_c(y)$   
	is equivalent by Lemma \ref{RealEqChar} and by means of \texttt{UCFApplication} \ref{UCFapp} to the goal:
	\begin{align}
		\forall p \spc \exists n_0 \spc \forall n \ge n_0 \left(\left|z_1^{\lambda}(n+2) -  z_2^{\lambda}(n+2)\right| \le 2^{-p}\right)
	\end{align}
	Now for any $p$ we set $n_0=p+3$ and for any $n \ge n0$ let $z_3:=$ and $z_4:=$. Then
	\begin{align*}
		\left|z_1^{\lambda}(n+2) -  z_2^{\lambda}(n+2)\right| &\le \left|z_1^{\lambda}(n+2) -  z_3\right| + \left|z_3 -  z_4\right| + \left|z_4 -  z_2^{\lambda}(n+2)\right|\\
		& \le 2^{-p-2} + 2^{-p-1} + 2^{-p-2}=2^{-p}.  
	\end{align*}
	where 
	$\left|z_1^{\lambda}(n+2) -  z_3\right|,\left|z_4 -  z_2^{\lambda}(n+2)\right| \le 2^{-p-2}$ by definition of the Cauchy sequence.
	For the middle term consider the equivalent rewritten goal:
	\begin{align*}
		-2^{-p-2} \le 2^{-p-2} + - \left|z_3 -  z_4\right| = 2^{-p-2} - (|z_1-z_2|)^{\lambda}(n+2) 
	\end{align*}
	which is in turn equivalent by Lemma \ref{RealNNegChar} together with the fact that $p+3 \le n$:
	\[
		 |z_1-z_2|=\left|h_c(a_{M(\omega(n)+1)}) - h_c(b_{N(\omega(n)+1)})\right|\le_r 2^{-p-2} 
	\]
	which holds by Definition \ref{UCFapp} and the fact that \[
	\left|x^\lambda(\omega(n)+1) - y^\lambda(\omega(n)+1)\right| \le 2^{-\omega(n)} \le 2^{-\omega(p+2)}
	\]
	which follows from Lemma \ref{RealEqChar}, condition that $x =_r y$ and since $p+2 \le n$. 
\end{proof}
\begin{rem}
	The above proof is a partial proof of the statement that the map $x \mapsto \texttt{UFCApplication} \spc f_c \spc x$ is proper in sense of Definition \ref{Cmap}.
\end{rem}

In summary, in this section we presented the most important concepts of the \texttt{Minlog} system and presented the formalization of constructive analysis in \texttt{Minlog} as well as the extension of the standard library which was necessary for our goal. In the next section we provide the proof-theoretical verification of the SOS Lyapunov functions in our framework.
\section{Formal Proofs of Stability for Polynomial Systems}
\label{sec:formal}
Once the SOS Lyapunov function has been obtained by  Algorithm \ref{alg:exactLyapunov}, the proof of the final formal verification goal involves dealing with nonlinear inequalities. In this section we make a brief remark on the general undecidability of the problem as well as provide technical details about the implementation of polynomials in \texttt{Minlog}. Finally we present the developed framework for the formal verification of Lyapunov functions by  introducing various inductive definitions for positive definite and nonnegative functions and by establishing the proof theoretical connections between these definitions.

Given an autonomous discrete-time difference equation \begin{equation}\label{sys:dt}
\pmb x^+=\pmb F(\pmb x),\pmb x(0)=\pmb x_0 \,
\end{equation}
the solution of \eqref{sys:dt} is a sequence of real numbers arising from recursive application of continuous vector-valued function $\pmb F$ starting at $\pmb x_0\in \mathbf{R}^m$. 
In order to ease the derivation, we will consider the function \pmb{F} as some mapping given by the type $\left(\mathbf{R}^m \Rightarrow \mathbf{R}^m\right)$ which satisfies the following IDPC:
\begin{idpc}[\texttt{ProperRHS}]
	\begin{align*}
		\texttt{ProperRHS: } &\texttt{$\forall \spc \pmb{F},m  \spc ( \forall \pmb x \left( \pmb x \spc \text{dim} = m \rightarrow x \spc \text{dim} = (\pmb F \spc \pmb x) \text{dim}  \right)$}\rightarrow \\
		&\texttt{$\forall \pmb x \left( \pmb x \spc \text{dim} = m \rightarrow \text{RealVector} \spc \pmb x \rightarrow \texttt{RealVector} \spc (\pmb F \spc \pmb x) \right)$ $\rightarrow$} \\
		& \texttt{$\forall \pmb x,\pmb y
			\left( \pmb x \spc \text{dim} = m \rightarrow \pmb x =_r \pmb y \rightarrow (\pmb F \spc \pmb x)=_r(\pmb F \spc \pmb y)\right) \rightarrow$
		}\texttt{ProperRHS}\left[\pmb F,m \right]).
	\end{align*}
\end{idpc}
These three requirements on $\pmb{F}$ are sufficient to prove all relevant intermediate goals which will arise during our interactive theorem proving. 

We introduce the sequences of vectors  $\left(\pmb x(n)\right)_n$ as type ($\mathbf{N} \Rightarrow \mathbf{R^m})$  and 
construct the sequences via application of $\pmb F$ recursively and hence define the solution of \eqref{sys:dt} by comparing the given real vector sequence with the constructed sequence, as it is shown via following program constant and the corresponding IDPC. 
\begin{prog}[\texttt{Simulation}] \label{def:Simulation}
	\phantom{text}
	\begin{align*}
		 &\texttt{Simulation}\left[\texttt{Zero},\pmb x_0,\pmb F\right] \mapsto \pmb x_0 \\
		 &\texttt{Simulation} \left[(\texttt{Succ } n) , \pmb x_0, \pmb F \right]\mapsto \pmb F \left( \texttt{Simulation} \left[ n , \pmb x_0 , \pmb F\right]\right).
	\end{align*}
\end{prog}
\begin{idpc}[\texttt{Solution}] \label{def:Solution}
	The sequence of real numbers arising from recursive application of $\pmb F$, which satisfies the IDPC \texttt{ProperRHS} starting at $\pmb x_0$ is called solution of the system \ref{sys:dt}, i.e.,
	\begin{align*}
		\forall \left(\pmb{x}(n)\right)_n, \pmb F, \pmb x_0 \left( \forall n \left( \pmb{x}(n)=_r\texttt{Simulation} \left[ n , \pmb x_0 , \pmb F\right] \right) \rightarrow \texttt{Solution}\left[\left(\pmb{x}(n)\right)_n , \pmb F , \pmb x_0 \right]\right).
	\end{align*}
\end{idpc}
Let $\left(\pmb{x}(n)\right)_n$ be such solution.
We may now define multiple versions of stability of the equilibrium solution in the following way:
\begin{defn}[\texttt{StableEqLe}] \label{def:StableEqLe}
	The equilibrium is called stable if for any $\varepsilon>0$ there exists $\delta$ such that  $\|\pmb x(0)\|\le_r\delta$ implies $\|\pmb x(n)\| \le_r \varepsilon$ for all $n \ge 0$. 
\end{defn}

\begin{idpc}[\texttt{StableEq}] \label{idpc:Stable}
	\begin{align*}
		&\forall \left(\pmb{x}(n)\right)_n, \pmb F, \pmb x_0, \| \cdot \| \left( \texttt{Solution}\left[\left(\pmb{x}(n)\right)_n , \pmb F , \pmb x_0 \right] \rightarrow \texttt{Norm} \left[\| \cdot \| \right] \rightarrow \right. \\&\forall \varepsilon > 0 \spc \left. \exists \delta \left(\| \pmb x(0) \|\le_r \delta \rightarrow \forall n \left( \| \pmb x(n)\| \le_r \varepsilon\right)\right) \rightarrow \texttt{StableEq} \left[\left(\pmb{x}(n)\right)_n,\pmb F,\pmb x_0, \| \cdot \|\right]  \right).
	\end{align*}
\end{idpc}

\begin{defn}[\texttt{StableEqLt}] \label{def:StableEqLt}
	The equilibrium is called stable if for any $\varepsilon>0$ there exists $\delta$ such that  $\|\pmb x(0)\|<\delta$ implies $\|\pmb x(n)\| < \varepsilon$ for all $n \ge 0$. 
\end{defn}
\begin{idpc}[\texttt{StableEqLt}] \label{idpc:StableEqLt}
	\begin{align*}
	&\forall \left(\pmb{x}(n)\right)_n, \pmb F, \pmb x_0, \| \cdot \| \left( \texttt{Solution}\left[\left(\pmb{x}(n)\right)_n , \pmb F , \pmb x_0 \right] \rightarrow \texttt{Norm} \left[\| \cdot \| \right] \rightarrow \right. \\&\forall \varepsilon > 0 \spc \left. \exists \delta \left( \exists p \spc \| \pmb x(0) \|<_p\delta \rightarrow \forall n \left( \exists p \spc  \| \pmb x(n)\| <_p \varepsilon\right)\right) \rightarrow \texttt{StableEqLt} \left[\left(\pmb{x}(n)\right)_n,\pmb F,\pmb x_0, \| \cdot \|\right]  \right).
	\end{align*}
\end{idpc}

\begin{defn}[\texttt{StableEqLtClass}] \label{def:StableEqLtClass}
	The equilibrium is called stable if for any $\varepsilon>0$ there exists $\delta$ such that  $\exists_{cl} \spc p \spc \|\pmb x(0)\|<_p \delta$ implies $\exists_{cl} \spc p \spc \|\pmb x(n)\| <_p \varepsilon$ for all $n \ge 0$. 
\end{defn}
\begin{idpc}[\texttt{StableEqLtClass}] \label{idpc:StableEqLtClass}
	\begin{align*}
	&\forall \left(\pmb{x}(n)\right)_n, \pmb F, \pmb x_0, \| \cdot \| \left( \texttt{Solution}\left[\left(\pmb{x}(n)\right)_n , \pmb F , \pmb x_0 \right] \rightarrow \texttt{Norm} \left[\| \cdot \| \right] \rightarrow \right. \\&\forall \varepsilon > 0 \spc \left. \exists \delta \left( \exists_{cl} \spc p \spc \| \pmb x(0) \|<_p\delta \rightarrow \forall n \left( \exists_{cl} \spc p \spc  \| \pmb x(n)\| <_p \varepsilon\right)\right) \rightarrow \texttt{StableEqLt} \left[\left(\pmb{x}(n)\right)_n,\pmb F,\pmb x_0, \| \cdot \|\right]  \right).
	\end{align*}
\end{idpc}



We define a special continuous function, which is (historically) so-called ``function of class-$\mathcal{K}$''. 

\begin{idpc}[\texttt{ClassKappa}]\label{idpc:kappa}
	\begin{align*}
&\forall \kappa  \spc \texttt{Contex} [\kappa]\rightarrow  \forall x \left(x=_r 0 \rightarrow \kappa(x)=_r 0\right) \rightarrow \\
&\forall x,y \left(0\le_r x \rightarrow 0 \le_r y \rightarrow x \le_r y \rightarrow \kappa(x) \le_r \kappa(y)\right)\rightarrow\\
&\forall x,y \left(0\le_r x \rightarrow 0 \le_r y\rightarrow \kappa(x) \le_r \kappa(y)  \rightarrow x \le_r y \right)\rightarrow\\
&\forall x \left( \exists p \spc 0 <_p x \rightarrow \exists p \spc 0 <_p \kappa(x) \right)\rightarrow \texttt{ClassKappa}\left[\kappa\right]
\end{align*}
\end{idpc}

\begin{idpc}[\texttt{ClassKappaPos}]\label{idpc:kappapos}
	\begin{align*}
	&\forall \kappa  \spc \texttt{ClassKappa} [\kappa]\rightarrow  \\
	&\forall x,y \left( 0\le_r x \rightarrow  \exists p \spc x <_p y \rightarrow \exists p \spc \kappa(x)  <_p \kappa(y) \right)\rightarrow  \\
	&\forall x,y \left(0\le_r x \rightarrow  \exists p \spc \kappa(x)  <_p \kappa(y)\rightarrow \exists p \spc x <_p y \right)\rightarrow \texttt{ClassKappaPos}\left[\kappa\right]. \\
	\end{align*}
\end{idpc}
Function, having the property \ref{idpc:kappapos} will be also denoted as a function of class-$\mathcal{K}_{<}$. Additionally, for some functions of class-$\mathcal{K},\mathcal{K}_{<}$, we require the existence of continuous proper inverse.

\begin{idpc}[\texttt{ClassKappaInverse}]\label{idpc:kappainv}
\begin{align*}
	&\forall \kappa_1,\kappa_1^{-1} \texttt{ClassKappa} \left[\kappa_1\right] \rightarrow \texttt{ProperMap}[\kappa_1^{-1},1,1] \rightarrow\\
	&\forall x \left(0\le_r x \rightarrow \kappa_1^{-1}(\kappa_1(x))=_r x\right) \rightarrow \\
	&\forall y \left(0\le_r y \rightarrow \kappa(\kappa_1^{-1})=_ry\right) \rightarrow \\
	&\forall x \left(0 \le_r x \rightarrow 0 \le_r \kappa^{-1}(x)\right) \rightarrow \\
	&\forall x \left( \exists p \spc 0 <_p x \rightarrow \exists p \spc 0 <_p  \kappa^{-1}(x) \right) \rightarrow \texttt{ClassKappaInverse}[\kappa,\kappa^{-1}] 
\end{align*}
\end{idpc}

\begin{rem}
	Using original classical definitions, it is possible to show. that for a function $\kappa$ of class-$\mathcal{K}$, the inverse $\kappa^{-1}$ is also a function of class-$\mathcal{K}$. In constructive setting, however, as opposed to some classically obvious facts, the reasoning is much harder and requires more advanced techniques \cite{mandelker1982continuity,mandelkern1983constructive}, that are not implemented to the authors best knowledge yet. However, some ideas in \cite[Proposition 2.8,Proposition 2.12]{mandelkern1983constructive} might be fruitful for further development of corresponding constructive theorems in proof assistants. 
\end{rem}

\begin{prop}[\texttt{LyapunovStableLe}]\label{LyapunovStableLe}
	Assume there exists a function $V(\pmb x)$ and a class-$\mathcal{K}_<$-function $\kappa_1$ such that:
	\begin{align}
		&\kappa_1(\|\pmb x\|) \le_r V(\pmb x) &  \\
		&V(\pmb x(n+1)) \le_r V(\pmb x(n)) &\spc (\texttt{PDVDiff}) \ref{pdvdiffelim}
	\end{align}
	for any solution $\left(\pmb x(n)\right)_n$ of \eqref{sys:dt}.
	Then the equilibrium of \eqref{sys:dt} is stable in sense of Definition \ref{def:StableEqLe}. 
\end{prop}
\begin{proof}
	From $\kappa_1(\|\pmb x\|) \le_r V(x)$ we first obtain the following assertions:
	\begin{enumerate}
		\item $\exists p \spc 0 <_p \|\pmb x\| \rightarrow \exists \spc p \spc  0 <_p V(\pmb x)$
		\item $\forall \theta_1 \spc \exists \theta_2 \left( \exists p \spc \theta_1 <_p \|\pmb x\| \rightarrow \theta_2 \le_r V(\pmb x)\right) $
	\end{enumerate}
	that imply that for any $\varepsilon$ we can construct a $\theta' \in (0,\theta_2)$.
	Additionally, by providing the appropriate modulus of continuity (w.r.t. the specified vector norm) for $V$, we have 
	\[ \exists \spc  p \spc 0 <_p \theta_2 \rightarrow \exists \delta \|\pmb x\| \le_r \delta \rightarrow V(\pmb x) \le_r \theta' .\] 
	Let $\delta_0$ be the quantity such that $V(\pmb x) \le_r \theta_2$ whenever $\|\pmb x\| \le_r \delta_0$. We claim that $\forall n (\|\pmb x_n\|\le_r \varepsilon)$ whenever $\|\pmb x_0\| \le_r \delta_0$.
	For some $n$ the statement $\|x_n\|\le_r \varepsilon$ is equivalent to $\neg (\exists p \spc \varepsilon <_p \|\pmb x_n\|)$ by Lemma \ref{RealNotGtIsLe} which in turn is equivalent to $ (\exists p \spc \varepsilon <_p \|\pmb x_n\|) \rightarrow \texttt{False}$. Additionally, we have $(\exists p \spc \theta' <_p V(\pmb x_n)) \rightarrow \texttt{False}$, since $V(\pmb x_n) \le_r V(\pmb x_0) \le_r \theta'$ by the condition \texttt{PDVDiff}, which means that proving \[\exists p \spc \theta' <_p V(\pmb x_n)\] would complete the proof. Since $\theta' \in (0,\theta_2)$, we have $p_0$ such that $\theta' <_{p_0} \theta_2$, which in turn means that $\theta' <_{p_0+3} V(\pmb x_n)$ by Lemma \ref{RealLeLtTrans} since $\theta_2 \le_r V(\pmb x_n)$.
\end{proof}

\begin{rem} \label{PosChooseProperAux}
	The appropriate choice of $\theta' \in (0,\theta)$ can be derived as follows.
	Assume that the real $\theta$ is given by the tuple $\left((a_n)_n,M\right)$. Let $p_0$ be the proof that $0<_{p_0} \theta$, i.e. $2^{-p_0} \le a_{M(p_0+1)}$. Introduce $\theta'$ as a rational number $a_{M(p_0+4)} - 2^{-p_0-2}$. Clearly $\theta'$ is a real number. Furthermore $\theta'<_{p_0+2} \theta$, which follows directly from the definition of the strict inequality. Finally, $0 <_{p0+2} \theta'$ due to the proper choice of the approximation parameter $p_0$ and the fact that the tuple $\left((a_n)_n,M\right)$ is \texttt{Cauchy}.
\end{rem}

\begin{prop}[\texttt{LyapunovStableLt}]\label{LyapunovStableLt}
	Assume there exists a function $V(\pmb x)$ and a class-$\mathcal{K}_<$-function $\kappa_1$ such that:
	\begin{align}
	&\kappa_1(\|\pmb x\|) \le_r V(\pmb x) &  \\
	&V(\pmb x(n+1)) \le_r V(\pmb x(n)) &\spc (\texttt{PDVDiff}) \label{pdvdiffelim}
	\end{align}
	for any solution $\left(\pmb x(n)\right)_n$ of \eqref{sys:dt}.
	Then the equilibrium of \eqref{sys:dt} is stable in sense of Definition \ref{def:StableEqLt}.
\end{prop}
\begin{proof}
	  Let $\varepsilon>0$ and choose $\varepsilon' \in (0,\varepsilon)$ as in Remark \ref{PosChooseProperAux}, i.e. $\exists p \spc 0 <_p \varepsilon' \land \exists p \spc \varepsilon' <_p \varepsilon$. The conditions of the proposition imply that the equilibrium of \eqref{sys:dt} is stable in sense of Definition \ref{def:StableEqLe} by Proposition \ref{LyapunovStableLe}, i.e. $\exists \delta \spc \|\pmb x_0\| \le_r \delta \rightarrow \forall n \spc \|\pmb x_n\|\le_r \varepsilon'$. Let $\delta_0$ be such quantity. The goal is read as:
	  \[
	  \exists p \spc \|\pmb x_0\| <_p \delta_0 \rightarrow \forall n \spc \exists p \spc \|\pmb x_n\| <_p \varepsilon.
	  \]
	  Let $p_0$ be the quantity such that $\varepsilon' <_{p_0} \varepsilon$. Then for any $n$ under consideration of the effect of Lemma \ref{RealLeLtTrans}, 
	  \begin{align*}
	  	\exists p \spc \|\pmb x_0\| <_p \delta_0\rightarrow \|\pmb x_0\| \le_r \delta \rightarrow \|\pmb x_n\|\le_r \varepsilon' \rightarrow \varepsilon' <_{p_0} \varepsilon \rightarrow \|\pmb x_n\| <_{p_0+3} \varepsilon.
	  \end{align*}
\end{proof}

\begin{prop}[\texttt{LyapunovStableLtClass}]\label{LyapunovStableLtClass}
	Assume there exists a function $V(\pmb x)$ and a class-$\mathcal{K}_<$-functions $\kappa_1$ such that:
	\begin{align}
	&\kappa_1(\|\pmb x\|) \le_r V(\pmb x) &  \\
	&V(\pmb x(n+1)) \le_r V(\pmb x(n)) &\spc (\texttt{PDVDiff}) \ref{pdvdiffelim}
	\end{align}
	for any solution $\left(\pmb x(n)\right)_n$ of \eqref{sys:dt}.
	Then the equilibrium of \eqref{sys:dt} is stable. 
\end{prop}

\begin{proof}
	From $\kappa_1(\|\pmb x\|) \le_r V(x)$ we first obtain the following assertions:
	\begin{enumerate}
		\item $\exists p \spc 0 <_p \|\pmb x\| \rightarrow \exists \spc p \spc  0 <_p V(\pmb x)$
		\item $\forall \theta_1 \in \mathbf{R} \spc \exists \theta_2 \left( \exists p \spc \theta_1 \le_r \|\pmb x\| \rightarrow \theta_2 \le_r V(\pmb x)\right) $
	\end{enumerate}
	that imply that for any $\varepsilon$ we can construct a $\theta' \in (0,\theta_2)$.
	Additionally, by providing the appropriate modulus of continuity for $V$, we have 
	\[ \exists \spc  p \spc 0 <_p \theta_2 \rightarrow \exists \delta \|\pmb x\| \le_r \delta \rightarrow V(\pmb x) \le_r \theta' .\] 
	Let $\delta_0$ be the quantity such that $V(\pmb x) \le_r \theta_2$ whenever $\|\pmb x\| \le_r \delta_0$. We claim that $\forall n ( \exists_{cl}\spc p  \|\pmb x_n\|<_p \varepsilon)$ whenever $\exists p \|\pmb x_0\| <_p \delta_0$.
	To see this, observe, that for some fixed $n$ the statement $ \exists_{cl} \spc p \|\pmb x_n\|<_p \varepsilon$ is equivalent to  $ (\varepsilon \le_r \|\pmb x_n\|) \rightarrow \texttt{False}$. Additionally, we have $(\exists p \spc \theta' <_p V(\pmb x_n)) \rightarrow \texttt{False}$, since $V(\pmb x_n) \le_r V(\pmb x_0) \le_r \theta'$, which means that proving \[\exists p \spc \theta' <_p V(\pmb x_n), \text{ whenever }\varepsilon \le \|\pmb x_n\|,\] would complete the proof. Since $\theta' \in (0,\theta_2)$, we have $p_0$ such that $\theta' <_{p_0} \theta_2$, which in turn means that $\theta' <_{p_0+3} V(\pmb x_n)$ by Lemma \ref{RealLeLtTrans} and since $\theta_2 \le_r V(\pmb x_n)$.
\end{proof}

\begin{rem}
	The original classical proof requires the construction of the  set $\Omega_{\varepsilon}:=\{\pmb x \in B_{\varepsilon} | V(\pmb x) \le \theta'\}$ and reasoning that $\Omega_{\varepsilon}$ lies in interior of $B_{\varepsilon}$, is compact and invariant for $\pmb F$, that might be complicated in the constructive setting. Our versions use simple  manipulations on inequalities.
\end{rem}
The following proposition allows one to state Lyapunov uniform stability. The proof is based on \cite[Theorem 2.16]{Halanay2000-stability}.
\begin{prop}[\texttt{LyapunovUniformStableLe}]\label{LyapunovUniformStableLe}
	Assume there exists a function $V(x)$ and two class-$\mathcal{K}$-functions $\kappa_1,\kappa_2$ such that:
	\begin{align}
&\kappa_1(\|\pmb x\|) \le_r V(\pmb x) \le_r \kappa_2(\|\pmb x\|)&\spc (\texttt{PDV})  \\ 
&V(\pmb x(n+1)) \le_r V(\pmb x(n)) &\spc (\texttt{PDVDiff}) \ref{pdvdiffelim}
\end{align}
	for any solution $\left(\pmb x(n)\right)_n$ of \eqref{sys:dt}.
	Then the equilibrium is stable in sense of Definition \ref{def:StableEqLe}. 
\end{prop}
\begin{proof}
	The Definition \ref{def:StableEqLe} (\texttt{StableLe})
	requires to introduce for any $\varepsilon>0$ the quantity $\delta$ such that  whenever $\|\pmb x(0)\| \le_r \delta$ we have $\forall n \spc \|\pmb x(n)\| \le_r \varepsilon$. We claim that $\delta  =\kappa_2^{-1}(\kappa_1(\varepsilon))$.
	From condition (\ref{pdvdiffelim}) (\texttt{PDVDiff}) we obtain $\forall  n \spc V (\pmb x(n))) \le_r V (\pmb x_0)$.
	Now, by the properties of the IDPCs \ref{idpc:kappa} and \ref{idpc:kappainv} we are able to show that:
	\begin{align*}
	\kappa_1(\pmb x(n)) \le_r V(\pmb x(n)) \le_r V(\pmb x_0) \le_r \kappa_2(\|\pmb x_0\|) \le_r \kappa_2(\delta)\le_r \kappa_2(\kappa_2^{-1}(\kappa_1(\varepsilon))) \le_r\kappa_1(\varepsilon) \,.
	\end{align*}
	And thus, $\kappa_1( \|\pmb x(n) \| ) \le_r \kappa_1(\varepsilon)  \rightarrow \|\pmb x(n)\| \le_r \varepsilon$.
\end{proof}

\begin{prop}[\texttt{LyapunovUniformStableLt}]\label{LyapunovUniformStableLt}
	Assume there exists a function $V(x)$ and two class-$\mathcal{K}_{<}$-functions $\kappa_1,\kappa_2$ such that:
	\begin{align}
	&\kappa_1(\|\pmb x\|) \le_r V(\pmb x) \le_r \kappa_2(\|\pmb x\|)&\spc (\texttt{PDV})  \\ 
	&V(\pmb x(n+1)) \le_r V(\pmb x(n)) &\spc (\texttt{PDVDiff}) \ref{pdvdiffelim}
	\end{align}
	for any solution $\left(\pmb x(n)\right)_n$ of \eqref{sys:dt}.
	Then the equilibrium is stable in sense of Definition \ref{def:StableEqLt}. 
\end{prop}
\begin{proof}
	The Definition \ref{def:StableEqLt} (\texttt{StableLt})
	requires to introduce for any $\varepsilon>0$ the quantity $\delta$ such that whenever $\exists p \spc \|\pmb x(0)\| <_p \delta$ we have $\forall n \spc \exists p \spc \|\pmb x(n)\| <_p \varepsilon$.  We claim that $\delta  =\kappa_2^{-1}(\kappa_1(\varepsilon))$ and let $p_0$ be the quantity such that $\kappa_2(\pmb x(0))<_{p_0} \kappa_2(\kappa_2^{-1}(\kappa_1(\varepsilon)))$, which is derivable from the assumption that $\exists p \spc \| \pmb x (0) \| <_p \delta$.
	Now similarly as in the proof of the Proposition \ref{LyapunovUniformStableLe} but under the consideration of the effects of the application of the Lemma \ref{RealLeLtTrans}:
	\begin{align*}
&	\kappa_1(\|\pmb x(n))\|) \le_r V(\pmb x(n)) \le_r V(\pmb x_0)  \le_r \kappa_2(\| \pmb x(0)\|) \rightarrow \kappa_2(\| \pmb x(0)\|) <_{p_0} \kappa_2(\kappa_2^{-1}(\kappa_1(\varepsilon)))) \rightarrow \\
&	\kappa_1(\|\pmb x(n)\|) <_{p_0+3} \kappa_2(\kappa_2^{-1}(\kappa_1(\varepsilon))) \rightarrow \kappa_2(\kappa_2^{-1} (\kappa_1(\varepsilon))) \le_r \kappa_1(\varepsilon)  \rightarrow \kappa_1(\pmb x_n) <_{p_0+6} \kappa_1(\varepsilon).
	\end{align*}
	And thus, $\kappa_1( \|\pmb x(n) \| ) <_{p_0+6} \kappa_1(\varepsilon)  \rightarrow \exists p \spc \|\pmb x(n)\| <_p \varepsilon$, since $\kappa_2$ is a function of class-$\mathcal{K}_<$.
\end{proof}

The next section focuses on examples where we certify formally the stability of discrete-time dynamical systems.

\section{Example for verification of Lyapunov functions}
\label{sec:benchs}
{\itshape{Example:}} Consider the following  two-dimensional discrete-time rational system:
\begin{align}
\label{ex:3}
x(n+1) & = \frac{y(n)}{1+x(n)^2} \,, \nonumber \\
y(n+1) & = \frac{x(n)}{1+y(n)^2} \,.
\end{align}

One can compute a quadratic SOS Lyapunov $V_2(x,y) = x^2+y^2$ with \texttt{exactLyapunov}, then we may proceed in \texttt{Minlog} as follows.\\
First, we prove the properties that are common for all versions of the Lyapunov theorems:
\begin{enumerate}
	\item \texttt{ProperRHS}: Consider the following functions:
	$f_1,f_2: \mathbf{R}^2 \rightarrow \mathbf{R}$ with the following data:
	\begin{itemize}
		\item $h_{f_1}(\pmb e,n)=e_2,\alpha_{f_{1}}(\pmb e_0,R_0,p)=0,\omega_{f_1}(\pmb e_0,R_0,p)=p+1$\\
		\item $h_{f_2}(\pmb e,n)=1/(1+e_1 \cdot e_1),\alpha_{f_{2}}(\pmb e_0,R_0,p)=0,\omega_{f_2}(\pmb e_0,R_0,p)=p+1$\\
	\end{itemize}
	and construct the map $F:= \left( F_1(x,y)  ,F_2(x,y) \right)$ with $F_1=f_1(x,y)\cdot f_2(x,y)$ and $F_2(x,y)=F_1(y,x)$, where $f_1(x,y)$ and $f_2(x,y)$ are real numbers arising from the program constant . Since application and multiplication produce real numbers and are compatible with the equality, the map $F$ clearly satisfies the requirement \texttt{ProperRHS}.
	\item \texttt{ClassKappa},\texttt{ClassKappaPos}:
	We choose the quadratic function for $\kappa_1,\kappa_2=(\cdot)^2$ and define it in terms of the type \texttt{contex}:
	\begin{itemize}
		\item $h_{(\cdot)^2}(a):=a^2$
		\item $\alpha_{(\cdot)^2}(c,d,p):=0$
		\item $\omega_{(\cdot)^2}(c,d,p):=p+q+1$
	\end{itemize}
	where $q=\texttt{cRatLeAbsBoundPos}(2\cdot\max\left\{|c|,|d|\right\})$ and \texttt{cRatLeAbsBoundPos} is the computational content of the statement $\forall a \in \mathbf{Q} \spc \exists p \in \mathbf{P} \spc  |a|\le 2^p. $ Then we are able to show the required properties for the map $x \mapsto \texttt{Application} \spc (\cdot)^2 x$ as follows.
	\begin{itemize}
		\item \texttt{Contex}: The proof that the tuple $\left(h_{\cdot^2}(a),\alpha_{\cdot^2}(c,d,p)\right)$ is \texttt{Cauchy} for any proper interval $[c,d]$ can be obtained immediately (in an automated way), since the function is polynomial. Next we need to show that for
		\[
		(|a^2-b^2| \le 2^{-p})
		\]
		whenever $a,b \in [c,d], 0\le n$ and $(a-b)\le 2^{-(p+q)}.$ It is easy to see that:
		\begin{align}
			|a^2-b^2| \le |a-b||a+b| \le 2^{-(p+q)}2^{q'} \le 2^{-(p+q)}2^q =2^{-p}
		\end{align}
	with $q'=\texttt{cRatLeAbsBoundPos}(a+b)$ and $2^{q'}\le 2^q$.
		\item $0\le_r x^2$ holds, since $x^2$ is a real number and $0\le_s x^2$, $x=_r 0 \rightarrow x^2=_r 0$ follows immediately by substitution and $0\le_r x \rightarrow 0\le_r y\rightarrow  x\le_r y \rightarrow x^2 \le_r y^2$ since $0 \le_r x,y$.
		\item $0\le_r x \rightarrow 0\le_r y\rightarrow  x^2 \le_r y^2 \rightarrow x\le_r y $, since $\forall z \spc 0\le_r z \rightarrow  z=_r  \sqrt{z^2}$ and $\sqrt{\cdot}$ is monotone. This proof requires the representation of the square root function via completion monad.
		\item $\forall x \in \mathbf{R} \left(  \exists p \spc 0 <_p x \rightarrow \exists p \spc 0 <_p x^2 \right)$. This proof was done in two steps. First, assume that for a continuous map $f_{ex}$ of type \texttt{contex}, we have a witness $\eta$ of type $\mathbf{P}\rightarrow \mathbf{P}$, such that
		\[
		\texttt{H\_eta: }	\forall a \in \mathbf{Q} \spc 0<_{p+1} a \rightarrow 0<_{\eta(p)}f_{ex}(a),
		\]
		i.e. the map will produce positive reals, if the input is a positive rational. The strengthening $p+1$ is required then to prove \[
		\texttt{H\_eta} \rightarrow \forall x \in \mathbf{R} \left(  \exists p \spc 0 <_p x \rightarrow \exists p \spc 0 <_p f_{ex}(x) \right)
		\]
		and takes into account the effects of the Lemma \ref{RealPosChar}. Thus, since for the quadratic function the witness is $\eta(p)=2(p+1)$, \texttt{H\_eta} holds and proves the required property.
		\item $\forall x,y \left( 0\le_r x \rightarrow  \exists p \spc x <_p y \rightarrow \exists p \spc x^2  <_p y^2 \right)$. Given $x=\left((a_n)_n,M\right)$,$x=\left((b_n)_n,N\right)$, $0 \le_r x$ and let $p_0$ be the quantity such that $x<_{p_0} y$. We assert that \[
		\exists n_0,p \spc \forall n\ge n_0 \left(2^{-p_0} \le a_n + b_n\right).
		\]
		The proof of this assertion involves combination of the Lemma \ref{RealPosChar} and \ref{RealNNegChar}. Let $m$ and $p_1$ be the quantities that prove the above assertion and claim that $x^2 <_{(p_0+p_1+2)} y^2$, since 
		\begin{align*}
		\forall n \ge n_1 \spc 2^{-(p_0+p_1+1)} \le b_n^2 - a_n^2 \rightarrow x^2 <_{p_0+p_1+2} y^2
		\end{align*}
	with $n_1=\max\left\{M(p+2),N(p+2),m)\right\}$ by Lemma \ref{RealPosChar}. Since $b_n^2-a_n^2=(b_n-a_n)(b_n+a_n)$,$2^{-(p_0+1)} \le (b_n-a_n)$ by Lemma \ref{RealPosChar} and $2^{-p_1}\le (b_n+a_n)$ by the above assertion, the goal follows.
	\item $\left(0\le_r x \rightarrow 0\le_r y \rightarrow \exists p \spc x^2  <_p y^2 \rightarrow \exists p \spc x <_p y \right)$. Here as a partial goal we prove
	\begin{align*}
		\texttt{RealLtWeaken: }\forall x,y \in \mathbf{R} \spc 0\le_r y \rightarrow \exists p \spc 0<_p xy  \rightarrow \exists p \spc 0<_p x
	\end{align*}
then the required property follows from \texttt{RealLtWeaken}$[(x-y),(x+y)]$.
	\end{itemize}  
	\item \texttt{PDVDiff}: by assertion that $\forall \pmb x \spc V(\pmb F(\pmb x)) \le_s V(\pmb x)$, the desired property follows.
\end{enumerate}
\textbf{Verification in sense of Propositions \ref{LyapunovStableLe},\ref{LyapunovStableLt},\ref{LyapunovStableLtClass}:}
\begin{enumerate}
	\item For the Euclidean norm $\| \cdot \|_2$ we have $\forall \pmb x \in \mathbf{R}^2 \spc  \| \pmb x \|_2^2 \le_r V(\pmb x)$ by the properties of the square root function.
	\item The condition that $\forall \beta>0 \spc \exists \delta \|\pmb x\|_2 \le_r \delta \rightarrow V(\pmb x) \le_r \beta$ can be easily verified by setting $\delta:=\sqrt{\beta}$.
\end{enumerate}
Thus $V,\kappa_1$ and $\pmb F$ satisfy conditions of the propositions \ref{LyapunovStableLe},\ref{LyapunovStableLt},\ref{LyapunovStableLtClass} and thus the equilibrium of \ref{sys:dt} is stable in sense of corresponding definitions.\\
\textbf{Verification in sense of Propositions \ref{LyapunovUniformStableLe},\ref{LyapunovUniformStableLt}:}
\begin{enumerate}
	 \item \texttt{ClassKappaInverse}:
	 Clearly, the square root function is a correct inverse function and is given by the formulae \ref{fct:sqrt} and \ref{fct:sqrt2} via the completion monad \ref{UCFapp}. 
	 \item Apply the Theorem \texttt{LyapunovUniformStable} and parse $V_2,\kappa_1,\kappa_2$ and $\kappa_2^{-1}$.
	 \item \texttt{PDV}:
	 We may choose the Euclidean norm $\| \cdot \|_2$. Then $\kappa_1(\| \pmb x \|_2) \le V(\pmb x) \le \kappa_2(\| \pmb x \|_2)$.
\end{enumerate}
Thus $V,\kappa_1$,$\kappa_2$ and $\pmb F$ satisfy the conditions of the propositions \ref{LyapunovUniformStableLe},\ref{LyapunovUniformStableLt} and thus the equilibrium of \ref{sys:dt} is uniformly stable in sense of corresponding definitions.\\

\section{Conclusion}
We provide  a computer-assisted framework to formally prove the stability of a given discrete-time polynomial system. 
For this, our framework consists of proving formally that a given function is a Lyapunov function, by means of exact SOS certificates.
As a proof of concept, we illustrate our formalization framework on a simple example with two variables. 
In order to apply such techniques to higher dimensional systems, one needs to overcome the typical scalability issues related to SOS-based optimization on polynomial systems. 
One possible remedy is to exploit the sparsity of the input data, either when there are few correlations between the variables \cite{tacchi2019approximating} or when there are few involved  monomials terms \cite{tssos,chordal-tssos,cstssos}.
An alternative to SOS is to rely on other nonnegativity certificates, such as sums of nonnegative circuits or sums of arithmetic-geometric-exponentials, which can be computed with more efficient classes of conic programming, such as second-order cone programming \cite{wang2019second}. 
Exact certificates can also be obtained in this case \cite{magron2019exact}.
An interesting track of further research is to derive formally certified outer/inner approximations of sets of interest arising in the context of dynamical systems, such as backward/forward reachable sets or maximal invariants. 
As for constructive formalization we are interested in expanding our library to the full formalization of multivariate analysis, vector analysis and verified numerical schemes for initial value problems. These concepts build a foundation of important notions in control theory and system dynamics and allow further to investigate the proof-as-programs paradigm using constructive real numbers.  Additionally there exists a verified implementation of DPLL algorithm in \texttt{Minlog} \cite{LAWRENCE2012243} which is of separate interest since many important applications (e.g. SMT solvers, graph algorithms) rely on DPLL and hence program extraction techniques may be applied on such kind of problems.

\paragraph{\textbf{Acknowledgements}.} 
The second author was supported by the FMJH Program PGMO (EPICS project) and  EDF, Thales, Orange et Criteo, as well as from the Tremplin ERC Stg Grant ANR-18-ERC2-0004-01 (T-COPS project).
This work has benefited from the Tremplin ERC Stg Grant ANR-18-ERC2-0004-01 (T-COPS project), the European Union's Horizon 2020 research and innovation programme under the Marie Sklodowska-Curie Actions, grant agreement 813211 (POEMA) as well as from the AI Interdisciplinary Institute ANITI funding, through the French ``Investing for the Future PIA3'' program under the Grant agreement n$^{\circ}$ANR-19-PI3A-0004.

\bibliographystyle{plain}
\bibliography{bib/sos,bib/qe,bib/constr-math,bib/formal_math,bib/form-ver-ctrl,bib/minlog,bib/computable,bib/russian-school,bib/schwichtenberg,bib/computable-functionals,bib/control-theory,bib/cas,bib/dyn-sys,bib/devadze}

\begin{thebibliography}{10}

\bibitem{affeldt2020formalizing}
Reynald Affeldt, Cyril Cohen, Marie Kerjean, Assia Mahboubi, Damien Rouhling, and Kazuhiko Sakaguchi.
\newblock Formalizing functional analysis structures in dependent type theory.
\newblock \url{https://staff.aist.go.jp/reynald.affeldt/fipc/main.pdf}, 2020.

\bibitem{Anand2015-roscoq}
Abhishek Anand and Ross Knepper.
\newblock {ROSC}oq: {R}obots powered by constructive reals.
\newblock In {\em International Conference on Interactive Theorem Proving}, pages 34--50. Springer, 2015.

\bibitem{Araiza-Illan2014-thm-prover-sys}
D.~Araiza-Illan, K.~Eder, and A.~Richards.
\newblock Formal verification of control systems' properties with theorem proving.
\newblock In {\em Proc. 2014 UKACC Int. Conf. Control}, pages 244--249, 2014.

\bibitem{Araiza2015-thm-proving-Simulink}
D.~Araiza-Illan, K.~Eder, and A.~Richards.
\newblock Verification of control systems implemented in simulink with assertion checks and theorem proving: A case study.
\newblock In {\em Proc. 2015 European Control Conf. (ECC)}, pages 2670--2675, 2015.

\bibitem{Benl1998-minlog}
Benl, Berger, Schwichtenberg, Seisenberger, and Zuber.
\newblock {\em Proof Theory at Work: Program Development in the Minlog System}, pages 41--71.
\newblock Springer Netherlands, Dordrecht, 1998.

\bibitem{Berger1993-total}
Ulrich Berger.
\newblock Total sets and objects in domain theory.
\newblock {\em Annals of pure and applied logic}, 60(2):91--117, 1993.

\bibitem{Berger2011-minlog}
Ulrich Berger, Kenji Miyamoto, Helmut Schwichtenberg, and Monika Seisenberger.
\newblock Minlog-a tool for program extraction supporting algebras and coalgebras.
\newblock In {\em International Conference on Algebra and Coalgebra in Computer Science}, pages 393--399. Springer, 2011.

\bibitem{Berger2012-proofs}
Ulrich Berger and Monika Seisenberger.
\newblock Proofs, programs, processes.
\newblock {\em Theory of Computing Systems}, 51(3):313--329, 2012.

\bibitem{lasserre2009moments}
Lasserre~Jean Bernard.
\newblock {\em Moments, positive polynomials and their applications}, volume~1.
\newblock World Scientific, 2009.

\bibitem{Bishop1985-constr-analysis}
{E}. {B}ishop and {D}. {B}ridges.
\newblock {\em {C}onstructive {A}nalysis}, volume 279.
\newblock Springer Science \& Business Media, 1985.

\bibitem{bobot2011why3}
Fran{\c{c}}ois Bobot, Jean-Christophe Filli{\^a}tre, Claude March{\'e}, Guillaume Melquiond, and Andrei Paskevich.
\newblock The {W}hy3 platform.
\newblock {\em LRI, CNRS \& Univ. Paris-Sud \& INRIA Saclay, version 0.64 edition}, 2011.

\bibitem{boldo2013wave}
Sylvie Boldo, Fran{\c{c}}ois Cl{\'e}ment, Jean-Christophe Filli{\^a}tre, Micaela Mayero, Guillaume Melquiond, and Pierre Weis.
\newblock Wave equation numerical resolution: a comprehensive mechanized proof of a {C} program.
\newblock {\em Journal of Automated Reasoning}, 50(4):423--456, 2013.

\bibitem{BEFB94}
S.~Boyd, L.~{El~{G}haoui}, E.~Feron, and V.~Balakrishnan.
\newblock {\em Linear Matrix Inequalities in System and Control Theory}, volume~15 of {\em Studies in Applied Mathematics}.
\newblock {SIAM}, Philadelphia, PA, June 1994.

\bibitem{Buss1998-handbook}
Samuel~R Buss.
\newblock {\em Handbook of proof theory}, volume 137.
\newblock Elsevier, 1998.

\bibitem{Chevillard11}
S.~Chevillard, J.~Harrison, M.~Joldes, and C.~Lauter.
\newblock Efficient and accurate computation of upper bounds of approximation errors.
\newblock {\em Theoretical Computer Science}, 412(16):1523 -- 1543, 2011.

\bibitem{choi1995sums}
Man-Duen Choi, Tsit~Yuen Lam, and Bruce Reznick.
\newblock Sums of squares of real polynomials.
\newblock In {\em Proceedings of Symposia in Pure mathematics}, volume~58, pages 103--126. American Mathematical Society, 1995.

\bibitem{cohen2017formal}
Cyril Cohen and Damien Rouhling.
\newblock A formal proof in {C}oq of {L}asalle’s invariance principle.
\newblock In {\em International Conference on Interactive Theorem Proving}, pages 148--163. Springer, 2017.

\bibitem{CoqProofAssistant}
{The Coq Proof Assistant}.
\newblock \url{http://coq.inria.fr/}.

\bibitem{cruz2004c}
Lu{\'\i}s Cruz-Filipe, Herman Geuvers, and Freek Wiedijk.
\newblock {C-CoRN}, the constructive {C}oq repository at {N}ijmegen.
\newblock In {\em International Conference on Mathematical Knowledge Management}, pages 88--103. Springer, 2004.

\bibitem{cuoq2012frama}
Pascal Cuoq, Florent Kirchner, Nikolai Kosmatov, Virgile Prevosto, Julien Signoles, and Boris Yakobowski.
\newblock Frama-c.
\newblock In {\em International Conference on Software Engineering and Formal Methods}, pages 233--247. Springer, 2012.

\bibitem{de2015lean}
Leonardo de~Moura, Soonho Kong, Jeremy Avigad, Floris Van~Doorn, and Jakob von Raumer.
\newblock The {L}ean theorem prover (system description).
\newblock In {\em International Conference on Automated Deduction}, pages 378--388. Springer, 2015.

\bibitem{con2formlog}
Grigory Devadze.
\newblock {con2formlog }.
\newblock \url{https://www.tu-chemnitz.de/etit/control/research/formal/index.php.en}, 2020.
\newblock [Online; accessed 10-October-2022].

\bibitem{fleuriot2000mechanization}
Jacques~D Fleuriot.
\newblock On the mechanization of real analysis in {I}sabelle/{HOL}.
\newblock In {\em International Conference on Theorem Proving in Higher Order Logics}, pages 145--161. Springer, 2000.

\bibitem{gamboa2001nonstandard}
Ruben~A Gamboa and Matt Kaufmann.
\newblock Nonstandard analysis in {ACL}2.
\newblock {\em Journal of automated reasoning}, 27(4):323--351, 2001.

\bibitem{Gao2014-descr-ctrl-thr}
S.~Gao.
\newblock Descriptive control theory: A proposal.
\newblock {\em arXiv preprint arXiv:1409.3560}, 2014.

\bibitem{geuvers2000constructive}
Herman Geuvers and Milad Niqui.
\newblock Constructive reals in {C}oq: Axioms and categoricity.
\newblock In {\em International Workshop on Types for Proofs and Programs}, pages 79--95. Springer, 2000.

\bibitem{Giesl2015-review}
Peter Giesl and Sigurdur Hafstein.
\newblock Review on computational methods for {L}yapunov functions.
\newblock {\em Discrete and Continuous Dynamical Systems-Series B}, 20(8):2291--2331, 2015.

\bibitem{gonthier2013machine}
Georges Gonthier, Andrea Asperti, Jeremy Avigad, Yves Bertot, Cyril Cohen, Fran{\c{c}}ois Garillot, St{\'e}phane Le~Roux, Assia Mahboubi, Russell O’Connor, Sidi~Ould Biha, et~al.
\newblock A machine-checked proof of the odd order theorem.
\newblock In {\em International Conference on Interactive Theorem Proving}, pages 163--179. Springer, 2013.

\bibitem{Gvozdenovic09}
N.~Gvozdenovic, M.~Laurent, and F.~Vallentin.
\newblock Block-diagonal semidefinite programming hierarchies for 0/1 programming.
\newblock {\em Oper. Res. Lett.}, 37:27--31, 2009.

\bibitem{Halanay2000-stability}
Aristide Halanay and Vladimir Rasvan.
\newblock {\em Stability and stable oscillations in discrete time systems}, volume~2.
\newblock CRC Press, 2000.

\bibitem{pi17}
T.~Hales, M.~Adams, G.~Bauer, D.~T. Dat, J.~Harrison, L.~T. Hoang, C.~Kaliszyk, V.~Magron, S.~Mclaughlin, T.~T. Nguyen, Q.~T. Nguyen, T.~Nipkow, S.~Obua, J.~Pleso, J.~Rute, A.~Solovyev, T.~H.~A. Ta, N.~T. Tran, T.~D. Trieu, J.~Urban, K.~K. Vu, and R.~Zumkeller.
\newblock {A Formal Proof of the Kepler Conjecture}.
\newblock {\em Forum of Mathematics, Pi}, 5, 2017.

\bibitem{harrison1996hol}
John Harrison.
\newblock {HOL} light: A tutorial introduction.
\newblock In {\em International Conference on Formal Methods in Computer-Aided Design}, pages 265--269. Springer, 1996.

\bibitem{harrison2007verifying}
John Harrison.
\newblock Verifying nonlinear real formulas via sums of squares.
\newblock In {\em International Conference on Theorem Proving in Higher Order Logics}, pages 102--118. Springer, 2007.

\bibitem{harrison2012theorem}
John Harrison.
\newblock {\em Theorem proving with the real numbers}.
\newblock Springer Science \& Business Media, 2012.

\bibitem{immler2019flow}
Fabian Immler and Christoph Traut.
\newblock The flow of {ODE}s: Formalization of variational equation and {P}oincar{\'e} map.
\newblock {\em Journal of Automated Reasoning}, 62(2):215--236, 2019.

\bibitem{KLYZ08}
E.~L. Kaltofen, B.~Li, Z.~Yang, and L.~Zhi.
\newblock Exact certification of global optimality of approximate factorizations via rationalizing sums-of-squares with floating point scalars.
\newblock In {\em Proceedings of the 21st International Symposium on Symbolic and Algebraic computation}, ISSAC '08, pages 155--164, New York, NY, USA, 2008. ACM.

\bibitem{khalil2002nonlinear}
H.~K. Khalil.
\newblock Nonlinear systems.
\newblock {\em Upper Saddle River}, 2002.

\bibitem{Ko1991}
Ker-I Ko.
\newblock {\em Computational Complexity of Real Functions}, pages 40--70.
\newblock Birkh{\"a}user Boston, Boston, MA, 1991.

\bibitem{Krebbers2013-realanalysis}
Robbert Krebbers and Bas Spitters.
\newblock Type classes for efficient exact real arithmetic in {C}oq.
\newblock {\em Logical Methods in Computer Science}, 9(1:1):1--27, 2013.

\bibitem{Kushner1999-markov}
Boris~A Kushner.
\newblock Markov's constructive analysis; a participant's view.
\newblock {\em Theoretical computer science}, 219(1-2):267--285, 1999.

\bibitem{lasserre2001global}
Jean~B Lasserre.
\newblock Global optimization with polynomials and the problem of moments.
\newblock {\em SIAM Journal on optimization}, 11(3):796--817, 2001.

\bibitem{lasserre2008nonlinear}
Jean~B Lasserre, Didier Henrion, Christophe Prieur, and Emmanuel Tr{\'e}lat.
\newblock Nonlinear optimal control via occupation measures and {LMI}-relaxations.
\newblock {\em SIAM journal on control and optimization}, 47(4):1643--1666, 2008.

\bibitem{LAWRENCE2012243}
Andrew Lawrence, Ulrich Berger, and Monika Seisenberger.
\newblock Extracting a {DPLL} algorithm.
\newblock {\em Electronic Notes in Theoretical Computer Science}, 286:243 -- 256, 2012.
\newblock Proceedings of the 28th Conference on the Mathematical Foundations of Programming Semantics (MFPS XXVIII).

\bibitem{magron14}
V.~Magron, X.~Allamigeon, S.~Gaubert, and B.~Werner.
\newblock {Formal proofs for Nonlinear Optimization}.
\newblock {\em Journal of Formalized Reasoning}, 8(1):1--24, 2015.

\bibitem{invsdp}
V.~Magron, M.~Forets, and D.~Henrion.
\newblock {Semidefinite Approximations of Invariant Measures for Polynomial Systems}.
\newblock {\em Discrete \& Continuous Dynamical Systems - B}, 24(1531-3492\_2019\_12\_6745):6745, 2019.

\bibitem{magron2018realcertify}
Victor Magron and Mohab Safey~El Din.
\newblock Realcertify: a {M}aple package for certifying non-negativity.
\newblock {\em ACM Communications in Computer Algebra}, 52(2):34--37, 2018.

\bibitem{univsos}
Victor Magron, Mohab~Safey El~Din, and Markus Schweighofer.
\newblock Algorithms for weighted sum of squares decomposition of non-negative univariate polynomials.
\newblock {\em Journal of Symbolic Computation}, 93:200--220, 2019.

\bibitem{magron2019semidefinite}
Victor Magron, Pierre-Loic Garoche, Didier Henrion, and Xavier Thirioux.
\newblock Semidefinite approximations of reachable sets for discrete-time polynomial systems.
\newblock {\em SIAM Journal on Control and Optimization}, 57(4):2799--2820, 2019.

\bibitem{magron2018exact}
Victor Magron and Mohab Safey El~Din.
\newblock {On Exact Polya and Putinar's Representations}.
\newblock In {\em Proceedings of the 2018 ACM International Symposium on Symbolic and Algebraic Computation}, pages 279--286, 2018.

\bibitem{magron2019exact}
Victor Magron, Henning Seidler, and Timo de~Wolff.
\newblock Exact optimization via sums of nonnegative circuits and arithmetic-geometric-mean-exponentials.
\newblock In {\em Proceedings of the 2019 on International Symposium on Symbolic and Algebraic Computation}, pages 291--298, 2019.

\bibitem{wang2019second}
Victor Magron and Jie Wang.
\newblock Sonc optimization and exact nonnegativity certificates via second-order cone programming.
\newblock {\em Journal of Symbolic Computation}, 115:346--370, 2023.

\bibitem{Makarov2013-picard}
Evgeny Makarov and Bas Spitters.
\newblock {T}he {P}icard algorithm for ordinary differential equations in {C}oq.
\newblock In {\em International Conference on Interactive Theorem Proving}, pages 463--468. Springer, 2013.

\bibitem{mandelker1982continuity}
Mark Mandelker.
\newblock Continuity of monotone functions.
\newblock {\em Pacific Journal of Mathematics}, 99(2):413--418, 1982.

\bibitem{mandelkern1983constructive}
Mark Mandelkern.
\newblock {\em Constructive continuity}, volume 277.
\newblock American Mathematical Soc., 1983.

\bibitem{martin2017reflexive}
{\'E}rik Martin-Dorel and Pierre Roux.
\newblock A reflexive tactic for polynomial positivity using numerical solvers and floating-point computations.
\newblock In {\em Proceedings of the 6th ACM SIGPLAN Conference on Certified Programs and Proofs}, pages 90--99, 2017.

\bibitem{mayero2001formalisation}
Micaela Mayero.
\newblock {\em Formalisation et automatisation de preuves en analyses r{\'e}elle et num{\'e}rique}.
\newblock PhD thesis, Paris 6, 2001.

\bibitem{Miyamoto2013-program}
Kenji Miyamoto, Fredrik~Nordvall Forsberg, and Helmut Schwichtenberg.
\newblock Program extraction from nested definitions.
\newblock In {\em International Conference on Interactive Theorem Proving}, pages 370--385. Springer, 2013.

\bibitem{Monniaux-Positivstellensatz2011}
David Monniaux and Pierre Corbineau.
\newblock On the generation of {P}ositivstellensatz witnesses in degenerate cases.
\newblock In Marko van Eekelen, Herman Geuvers, Julien Schmaltz, and Freek Wiedijk, editors, {\em Interactive Theorem Proving}, pages 249--264, Berlin, Heidelberg, 2011. Springer Berlin Heidelberg.

\bibitem{moseksoft}
{The MOSEK optimization software}.
\newblock \url{http://www.mosek.com/}.

\bibitem{Nakata10GMP}
M.~Nakata.
\newblock {A numerical evaluation of highly accurate multiple-precision arithmetic version of semidefinite programming solver: SDPA-GMP, -QD and -DD.}
\newblock In {\em CACSD}, pages 29--34, 2010.

\bibitem{Nipkow02IsabelleHol}
T.~Nipkow, L.C. Paulson, and M.~Wenzel.
\newblock {\em Isabelle/HOL --- A Proof Assistant for Higher-Order Logic}, volume 2283 of {\em LNCS}.
\newblock Springer, 2002.

\bibitem{OConnor2007-monad}
Russell O’Connor.
\newblock A monadic, functional implementation of real numbers.
\newblock {\em Mathematical Structures in Computer Science}, 17(1):129–159, 2007.

\bibitem{Oconnor2008-certified}
Russell O’Connor.
\newblock Certified exact transcendental real number computation in {C}oq.
\newblock In {\em International Conference on Theorem Proving in Higher Order Logics}, pages 246--261. Springer, 2008.

\bibitem{phdParrilo}
{P. A. Parrilo}.
\newblock {\em {Structured Semidefinite Programs and Semialgebraic Geometry Methods in Robustness and Optimization}}.
\newblock PhD thesis, California Inst.~Tech., 2000.

\bibitem{PaPe08}
H.~Peyrl and P.A. Parrilo.
\newblock Computing sum of squares decompositions with rational coefficients.
\newblock {\em Theoretical Computer Science}, 409(2):269--281, 2008.

\bibitem{Putinar1993positive}
M.~Putinar.
\newblock {Positive polynomials on compact semi-algebraic sets}.
\newblock {\em Indiana University Mathematics Journal}, 42(3):969--984, 1993.

\bibitem{reznick2000some}
Bruce Reznick.
\newblock Some concrete aspects of {H}ilbert's 17th problem.
\newblock {\em Contemporary mathematics}, 253:251--272, 2000.

\bibitem{Rouhling2018-formal}
Damien Rouhling.
\newblock A formal proof in {C}oq of a control function for the inverted pendulum.
\newblock In {\em Proceedings of the 7th ACM SIGPLAN International Conference on Certified Programs and Proofs}, pages 28--41, 2018.

\bibitem{Schwichtenberg2012-constr-analys}
{H}. {S}chwichtenberg.
\newblock {\em {C}onstructive {A}nalysis with {W}itnesses}.
\newblock {M}anuscript, 2012.

\bibitem{Schwichtenberg2006-constructive}
Helmut Schwichtenberg.
\newblock Constructive analysis with witnesses.
\newblock {\em Proof Technology and Computation. Natio Science Series}, pages 323--354, 2006.

\bibitem{Schwichtenberg2006-inverting}
Helmut Schwichtenberg.
\newblock Inverting monotone continuous functions in constructive analysis.
\newblock In {\em Conference on Computability in Europe}, pages 490--504. Springer, 2006.

\bibitem{Schwichtenberg2009-program}
Helmut Schwichtenberg.
\newblock Program extraction in constructive analysis.
\newblock In {\em Logicism, Intuitionism, and Formalism}, pages 255--275. Springer, 2009.

\bibitem{schwichtenberg2011minlog}
Helmut Schwichtenberg.
\newblock Minlog reference manual, 2011.

\bibitem{Scott1982-domains}
Dana~S Scott.
\newblock Domains for denotational semantics.
\newblock In {\em International Colloquium on Automata, Languages, and Programming}, pages 577--610. Springer, 1982.

\bibitem{Stoltenberg1994-domain}
Viggo Stoltenberg-Hansen, Ingrid Lindstr{\"o}m, Edward~R Griffor, et~al.
\newblock {\em Mathematical theory of domains}, volume~22.
\newblock Cambridge University Press, 1994.

\bibitem{Streif2013-estimation}
Stefan Streif, Philipp Rumschinski, Didier Henrion, and Rolf Findeisen.
\newblock Estimation of consistent parameter sets for continuous-time nonlinear systems using occupation measures and {LMI} relaxations.
\newblock In {\em 52nd IEEE Conference on Decision and Control}, pages 6379--6384. IEEE, 2013.

\bibitem{Sturm98usingsedumi}
J.~F. Sturm.
\newblock {Using SeDuMi 1.02, a MATLAB toolbox for optimization over symmetric cones}, 1998.

\bibitem{tacchi2019approximating}
Matteo Tacchi, Carmen Cardozo, Didier Henrion, and Jean Lasserre.
\newblock Approximating regions of attraction of a sparse polynomial differential system.
\newblock {\em arXiv preprint arXiv:1911.09500}, 2019.

\bibitem{vandenberghe1996semidefinite}
Lieven Vandenberghe and Stephen Boyd.
\newblock Semidefinite programming.
\newblock {\em SIAM review}, 38(1):49--95, 1996.

\bibitem{cstssos}
J.~Wang, V.~Magron, J.-B. Lasserre, and N.~H.~A. Mai.
\newblock {CS-TSSOS: Correlative and term sparsity for large-scale polynomial optimization}.
\newblock {\em arXiv preprint arXiv:2005.02828}, 2020.

\bibitem{chordal-tssos}
Jie Wang, Victor Magron, and Jean-Bernard Lasserre.
\newblock Chordal-tssos: a moment-sos hierarchy that exploits term sparsity with chordal extension.
\newblock {\em SIAM Journal on Optimization}, 31(1):114--141, 2021.

\bibitem{tssos}
Jie Wang, Victor Magron, and Jean-Bernard Lasserre.
\newblock Tssos: A moment-sos hierarchy that exploits term sparsity.
\newblock {\em SIAM Journal on Optimization}, 31(1):30--58, 2021.

\bibitem{Weihrauch2012-computable-analys}
K.~Weihrauch.
\newblock {\em Computable {A}nalysis: an {I}ntroduction}.
\newblock Springer Science \& Business Media, 2012.

\bibitem{Yamashita10SDPA}
M.~Yamashita, K.~Fujisawa, K.~Nakata, M.~Nakata, M.~Fukuda, K.~Kobayashi, and K.~Goto.
\newblock {A high-performance software package for semidefinite programs : SDPA7}.
\newblock Technical report, Dept. of Information Sciences, Tokyo Institute of Technology, Tokyo, Japan, 2010.

\bibitem{Ye2011-SF}
{F}. {Y}e.
\newblock {\em Strict {F}initism and the {L}ogic of {M}athematical {A}pplications}, volume 355.
\newblock Springer, 2011.

\bibitem{Zou2013-form-ver-Chinese-train}
L.~Zou, J.~Lv, S.~Wang, N.~Zhan, T.~Tang, L.~Yuan, and Y.~Liu.
\newblock Verifying chinese train control system under a combined scenario by theorem proving.
\newblock In {\em Proc. Working Conf. Verified Software: Theories, Tools, and Experiments}, pages 262--280. Springer, 2013.

\end{thebibliography}
\end{document}